\newcommand{\norm}[1]{\left\Vert#1\right\Vert}
\newcommand{\abs}[1]{\left\vert#1\right\vert}
\renewcommand{\mid}{\,|\,}
\begin{document}
\title{Multidimensional Markov FBSDEs with superquadratic growth}
\keyAMSClassification{(2000) 60H10, 60H07, 60H30}
\author[a,1]{Michael Kupper}
\author[b,2]{Peng Luo}
\author[c,3]{Ludovic Tangpi}
%Address
\address[a]{University of Konstanz, Universit\"atsstra\ss e.~10, D-78457 Konstanz, Germany}
\address[b]{ETH Z\"urich, R\"amistrasse 101, 8092 Z\"urich, Switzerland}
\address[c]{University of Vienna, Faculty of Mathematics, Oskar-Morgenstern-Platz 1, A-1090 Wien, Austria}
%Mail
\eMail[1]{kupper@uni-konstanz.de}
\eMail[2]{peng.luo@math.ethz.ch}
\eMail[3]{ludovic.tangpi@univie.ac.at}
%\myThanks[u]{Financial support from China Scholarship Council (File No. 201306220101), NSF of China (No.~11221061) and the Project 111 (No. B12023).}
%\myThanks[v]{Financial support by the Vienna Science and Technologie Fund (WWTF) through project MA14-008.}

\abstract{

We give local and global existence and uniqueness results for systems of coupled FBSDEs in the multidimensional setting and with generators allowed to grow arbitrarily fast in the control variable.
Our results are based on Malliavin calculus arguments and pasting techniques.
%These results can be applied to derive existence of equilibria in incomplete markets.
}

\keyWords{Markovian FBSDE, superquadratic, existence and uniqueness}
\maketitle

\section{Introduction}

Given a multidimensional Brownian motion $W$ on a probability space,  consider the system of forward and backward stochastic differential equations
\begin{equation}
\label{eq:fbsde1}
	\begin{cases}
		X_t &=x+\int_{0}^{t}b_s(X_s,Y_s)\,ds+\int_{0}^{t}\sigma_sdW_s\\
	Y_t &=h(X_T)+\int_{t}^{T}g_s(X_s,Y_s,Z_s)\,ds-\int_{t}^{T}Z_sdW_s, \quad t\in [0,T]
	\end{cases}
\end{equation}
where $x$ is a known initial value, $T>0$ and $b$, $\sigma$, $g$ and $h$ are given functions.
In this paper, we give conditions under which the system admits a unique solution in the case where the value process $Y$ is multidimensional and the generator $g$ can grow arbitrarily fast in the control process $Z$.

The focus in this paper is on Markovian systems, in which the functions $b$, $\sigma$, $g$ and $h$ are deterministic.
We consider generators that are Lipschitz continuous in $X$ and $Y$ and locally Lipschitz continuous in $Z$.
In this setting, and for one-dimensional value processes, the decoupled system (with $b$ depending only on $X$) has been solved by \citet{Che-Nam} based on Malliavin calculus arguments.
In fact, using that for Lipschitz continuous generators the trace of the Malliavin derivative of the value process $Y$ is a version of the control process, they showed that the control process can be uniformly bounded, hence enabling solvability for locally Lipschitz generators by a truncation argument.
%Our first contribution is to show that this method can be extended to the multidimensional case, provided that the time horizon is sufficiently small.
%Moreover, using the PDE representation of Markovian Lipschitz FBSDEs as developed for instance in \citet{Cvi-Zha} and a pasting procedure, we construct a unique solution to the multidimensional equation with superquadratic growth and for arbitrary time horizon.
We make ample use of their method to deal with the coupled system \eqref{eq:fbsde1}.
In this case, we propose a Picard iteration scheme which yields a sequence that can be proved to be a Cauchy sequence in an appropriate Banach space under uniform boundedness of the control processes derived using Malliavin calculus arguments provided that the time horizon is small enough.
Moreover using the PDE representation of Markovian Lipschitz FBSDEs as developed for instance in \citet{Delarue} and a pasting precedure, we construct a unique global solution for generators with growth specified on the diagonal and under additional assumptions, mainly non-degeneracy of the volatility $\sigma$, see Theorem \ref{thm:fbsde-global}.

Systems such as \eqref{eq:fbsde1} naturally appear in numerous areas of applied mathematics including stochastic control and mathematical finance, see \citet{xyz}, \citet{karoui01} and \citet{Hor-etal}.
As shown for instance in \citet{Ma-Pro-Yong} and \citet{Che-Nam}, in the Markovian case, FBSDEs can be linked to parabolic PDEs.
More recently \citet{Fro-Imk-Pro} proved that FBSDEs can be used in the study of the Skorokhod embedding problem.

BSDEs and FBSDEs with Lipschitz continuous generators are well understood, we refer to \citet{karoui01} and \citet{Delarue}.
If $Y$ is one-dimensional and $g$ is allowed to have quadratic growth in the control process $Z$, BSDEs' solutions have been obtained by \citet{kobylanski01}, \citet{barrieu2013} and \citet{Bri-Hu1,Bri-Hu} under various assumptions on the terminal condition $\xi = h(X_T)$.
 We further refer to \citet{Delbaen11}, \citet{DHK1101}, \citet{Che-Nam} and \citet{optimierung} for results on one-dimensional BSDEs and FBSDEs with superquadratic growth.
Mainly due to the absence of comparison principle, general solvability of multidimensional BSDEs with quadratic growth is less well understood.
Some important progress have been achieved recently for BSDEs with small terminal conditions, see \citet{tevzadze}  and for more recent development, see \citet{Hu-Tang}, \citet{fbsdedq}, \citet{Jam-Kup-Luo}, \citet{Che-Nam_mbsde}, \citet{frei-split} and \citet{Xing-Zit}.

To the best of our knowledge, the only works studying well-posedness of coupled FBSDEs with quadratic growth are the article of \citet{Ant-Ham} and the preprints of \citet{fbsdedq} and \citet{Fro-Imk}.

In \cite{Ant-Ham} the authors consider a one-dimensional equation with one dimensional Brownian motion and impose monotonicity conditions on the coefficient so that comparison principles for SDEs and BSDEs can be applied.
A (non-necessarily unique) solution is then obtained by monotone convergence of an iterative scheme.
In \cite{Fro-Imk}, a fully coupled Markovian FBSDE is considered with multidimensional forward and value processes and locally Lipschitz generator in $(Y,Z)$ and a existence of a unique local solution is obtained using the technique of decoupling fields and an extension to global solutions is proposed.
% See also \cite[Chapter 4]{Fromm} for an extension of this result to multi-dimension and locally Lipschitz.
% Previous results on quadratic coupled FBSDEs can be found in \citet{Ant-Ham} and \citet{Fromm}.
Although the (non-Markovian) system studied in \citet{fbsdedq} is the same as the one considered in the present paper, the techniques here are essentially different.
Furthermore, the main results we present here can be extended to the non-Markovian setting and to random diffusion coefficient (when $\sigma$ depends on $X$ and $Y$) under stronger assumptions involving the Malliavin derivatives of $g$ and $h$. We refer to the Ph.D. thesis of \citet{Luo-thesis} for details.

In the next section, we present our probabilistic setting and the principal results of the paper.
In Section \ref{Sec:mBSDE} we prove local solvability of multidimensional BSDE with superquadratic growth and give conditions guaranteeing global solvability. Section \ref{sec:FBSDE} is dedicated to the proofs of the main results.
%We conclude the paper by an appendix in which we present extensions of our results to non-Markovian  and Markovian equations with random diffusion coefficient $\sigma$.

\section{Main results}

Let $(\Omega, {\cal F}, ({\cal F}_t)_{t \in [0,T]}, P)$
be a filtered probability space, where $({\cal F}_t)_{t \in [0,T]}$ is the augmented filtration generated
by a $d$-dimensional Brownian motion $W$, and ${\cal F} = {\cal F}_T$ for a finite time horizon $T \in (0, \infty)$.
The product  $\Omega \times [0,T]$ is endowed with the predictable $\sigma$-algebra. Subsets of $\mathbb{R}^k$ and
$\mathbb{R}^{k\times k}$, $k\in\mathbb{N}$, are always endowed with the Borel $\sigma$-algebra induced by the Euclidean norm $|\cdot|$. The interval $[0,T]$ is equipped with the Lebesgue measure. Unless otherwise stated, all equalities and inequalities between random variables and processes will be understood in the $P$-almost sure and $P\otimes dt$-almost sure sense, respectively.
For $p \in [1, \infty]$ and $k\in\mathbb{N}$, denote by ${\cal S}^p(\mathbb{R}^k)$
the space of all predictable continuous processes $X$
with values in $\mathbb{R}^k$ such that
$\norm{X}_{{\cal S}^p(\mathbb{R}^k)}^p :=  E[(\sup\nolimits_{t \in [0,T]}\abs{X_t})^p] < \infty$,
and by ${\cal H}^p(\mathbb{R}^{k})$ the space of all predictable processes $Z$ with values in $\mathbb{R}^{k}$ such that $\norm{Z}_{{\cal H}^p(\mathbb{R}^{k})}^p := E[(\int_0^T\abs{Z_u}^2\,du)^{p/2}] < \infty$.

%We assume that the filtration is generated by a $d$-dimensional Brownian motion $W$, is complete and right continuous.
%Let us also assume that .
%We endow with the predictable $\sigma$-algebra and $\mathbb{R}^k$ with its Borel $\sigma$-algebra.
%The interval $[0,T]$ is equipped with the Lebesgue measure.
%Unless otherwise stated, all equalities and inequalities between random variables and processes will be understood in the $P$-a.s. and $P\otimes dt$-a.e. sense, respectively.
%For $p \in [1, \infty]$ and $m, l \in \mathbb{N}$, we denote by ${\cal S}^p(\mathbb{R}^m)$ the space of predictable and continuous processes $X$ valued in $\mathbb{R}^m$ such that $\norm{X}_{{\cal S}^p(\mathbb{R}^m)}^p :=  E[(\sup_{t \in [0,T]}\abs{X_t})^p] < \infty$ and by ${\cal H}^p(\mathbb{R}^{l\times d})$ the space of predictable processes $Z$ valued in $\mathbb{R}^{l\times d}$ such that $\norm{Z}_{{\cal H}^p(\mathbb{R}^{l\times d})}^p := E[(\int_0^T\abs{Z_u}^2\,du)^{p/2}] < \infty$.
%For a suitable integrand $Z$, we denote by $Z\cdot W$ the stochastic integral $(\int_0^tZ_u\,dW_u)_{t \in [0,T]}$ of $Z$ with respect to $W$.
%It follows from \citet{protter01} that $Z\cdot W$ defines a continuous martingale for any $Z \in {\cal H}^p(\mathbb{R}^{l\times d})$.
% Let us further define by $\text{BMO}$ the martingales $Z\cdot W$ valued in $\mathbb{R}^{ l}$ such that
% \begin{equation*}
% 	\norm{Z\cdot W}_{\text{BMO}}:= \sup_{\tau}\norm{E\left[\int_\tau^TZ_u^2\,du\mid {\cal F}_\tau \right]}_\infty< \infty,
% \end{equation*}
% where the supremum is taken over all stopping times valued in $[0,T]$.

Let $l, m \in \mathbb{N}$ be fixed.
% \begin{equation}\label{eq:fbsde1}
% \begin{cases}
% 	X_t = x + \int_0^tb_u(X_u, Y_u )\,du + \int_0^t\sigma_u\,dW_u\\
% 	Y_t = h(X_T) + \int_t^Tg_u(X_u, Y_u,Z_u)\,du - \int_t^TZ_u\,dW_u.
% \end{cases}
% \end{equation}
% when the generator $g$ is allowed to growth arbitrarily fast in the control process $Z$.
% In the case where the forward and the backward equations in the FBSDE are not coupled, global solvability of the equation
% \begin{equation}\label{eq:dec-fbsde}
% \begin{cases}
% 	X_t = x + \int_0^tb_u(X_u )\,du + \int_0^t\sigma_u\,dW_u\\
% 	Y_t = h(X_T) + \int_t^Tg_u(X_u, Y_u,Z_u)\,du - \int_t^TZ_u\,dW_u
% \end{cases}
% \end{equation}
The solution of \eqref{eq:fbsde1}  with values in $\mathbb{R}^m\times \mathbb{R}^l\times \mathbb{R}^{l\times d}$ can be obtained under the following conditions:
\begin{enumerate}[label=(\textsc{A1}),leftmargin=40pt]
	\item $b:  [0,T]\times \mathbb{R}^{l}  \to \mathbb{R}^m $ is a continuous function and there exist $k_1, k_2, \lambda_1\ge 0$ such that
	\begin{align*}
		\abs{b_t(x,y) - b_t(x',y')} \le k_1\abs{x - x'} + k_2\abs{y - y'} \text{ and }
		 \abs{b_t(x,y)} \le \lambda_1(1 + \abs{x} + \abs{y} )
	\end{align*}
	for all $x, x' \in \mathbb{R}^m$ and $y,y' \in \mathbb{R}^l$.\label{a1}
\end{enumerate}
\begin{enumerate}[label = (\textsc{A2}), leftmargin = 40pt]
	\item $\sigma: [0,T] \to \mathbb{R}^{m\times d}$ is a measurable function and there is $\lambda_2 \ge 0$ such that $\abs{\sigma_t} \le \lambda_2$ for all $t \in [0,T]$.\label{a2}
\end{enumerate}
\begin{enumerate}[label = (\textsc{A3}), leftmargin = 40pt]
	\item $h: \mathbb{R}^m \to \mathbb{R}^{l}$ is a continuous function and there exists $k_5 \ge 0$ such that
	\begin{align*}
		\abs{h(x) - h(x')} &\le k_5\abs{x - x'}
	\end{align*}
	for all $x, x' \in \mathbb{R}^m$. \label{a3}
\end{enumerate}
\begin{enumerate}[label = (\textsc{A4}), leftmargin = 40pt]
	\item $g: [0,T]\times \mathbb{R}^m \times \mathbb{R}^{l}\times \mathbb{R}^{l\times d}  \to \mathbb{R}^{l}$ is a continuous function satisfying  $g_\cdot(0,0,0) \in L^2([0,T])$, and there exist $k_3, k_4\ge 0$ and a nondecreasing function $\rho: \mathbb{R}_+ \to \mathbb{R}_+$ such that
	\begin{align}
	\label{eq:a3}
		\abs{g_t(x, y,z) - g_t(x',y,z)} &\le k_3\abs{x - x'}
	\end{align}
	for all $x, x' \in \mathbb{R}^m$, $y \in \mathbb{R}^{l}$ and $z \in \mathbb{R}^{l\times d}$ such that $\abs{z}\le M:=4\lambda_2k_5\sqrt{dl}$ and
	\begin{align*}
		\abs{g_t(x, y,z) - g_t(x,y',z')} &\le  k_4\abs{y - y'}+ \rho\left(\abs{z}\vee \abs{z'}\right)\abs{z - z'}
	\end{align*}
	for all $x \in \mathbb{R}^m$, $y, y' \in \mathbb{R}^{l}$ and $z,z' \in \mathbb{R}^{l\times d}$. \label{a4}
\end{enumerate}
\begin{enumerate}[label = (\textsc{A5}), leftmargin = 40pt]
	\item There exists a constant $K\ge 0$ such that
	\begin{equation*}
	 	\abs{g_t(x, y, z) - g_t(x', y, z) - g_t(x, y', z')+g_t(x', y', z')}\le K\abs{x - x'}\left(\abs{y-y'} + \abs{z -z'} \right)
	 \end{equation*}
	 for all $t \in [0,T]$, $x, x' \in \mathbb{R}^m$, $y,y' \in \mathbb{R}^{l}$ and $z, z' \in \mathbb{R}^{l\times d}$. \label{a5}
\end{enumerate}
%We have the following result whose proof is given in Section \ref{sec:dec-fbsde}.
%\begin{theorem}
%\label{thm:markov2}
%	Assume that \ref{a1prim} and \ref{a2}-\ref{a5} hold.
%	Then for every $T>0$ the FBSDE \eqref{eq:dec-fbsde} has a unique solution $(X, Y, Z) \in {\cal S}^2(\mathbb{R}^m)\times {\cal S}^2(\mathbb{R}^l)\times {\cal S}^\infty(\mathbb{R}^{l\times d}) $ and $|Z_t|\le M$.
%\end{theorem}

%{\color{red}transition to the coupled case}
 Our first main result ensures local existence and uniqueness for the coupled FBSDE \eqref{eq:fbsde1} under the previous assumptions.
% where we replace the condition \ref{a1} by the condition
% \begin{enumerate}[label=(\textsc{A1'}),leftmargin=40pt]
% 	\item $b:  [0,T]\times \mathbb{R}^m  \to \mathbb{R}^m $ is a continuous function and there exists $k_1, \lambda_1\ge 0$ such that
% 	\begin{align*}
% 		\abs{b_t(x,y) - b_t(x',y')} \le k_1\abs{x - x'}+k_2\abs{y-y'}  \text{ and }
% 		 \abs{b_t(x)} \le \lambda_1(1 + \abs{x} +\abs{y'})
% 	\end{align*}
% 	for all $x, x' \in \mathbb{R}^m$ and $y,y' \in \mathbb{R}^l$.\label{a1prim}%({\color{red}we can replace linear growth by $L^2$ integrability of $b_t(0)$})
% \end{enumerate}
 The proof is given in Section \ref{sec:FBSDE}.
\begin{theorem}
\label{thm:markov1}
	Assume that \ref{a1}-\ref{a5} hold. Then there exists a constant $C_{k,\lambda,l,d}>0$ depending only on $k_1$, $k_2$, $k_3$, $k_4$, $k_5$, $\lambda_2$, $l$ and $d$, such that the FBSDE \eqref{eq:fbsde1} has a unique solution $(X,Y,Z) \in {\cal S}^2(\mathbb{R}^m)\times{\cal S}^2(\mathbb{R}^l)\times {\cal S}^\infty(\mathbb{R}^{l\times d})$ with $|Z_t| \le M$,
    whenever $T\le C_{k,\lambda,l,d}$.
\end{theorem}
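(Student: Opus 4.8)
The plan is to reduce the equation to a fully Lipschitz one by truncation, solve the latter by a Picard iteration, and use Malliavin calculus to keep the control processes of all iterates bounded by $M$, so that the truncation is never active.

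\emph{Truncation.} By \ref{a4}, $g$ is locally Lipschitz in $z$, so we can fix $N>M$ and a continuous $\bar g$ coinciding with $g$ on $\{\abs z\le N\}$, globally Lipschitz in $z$, and satisfying the bounds of \ref{a4}--\ref{a5} in the remaining variables. It then suffices to construct, for small $T$, a solution of \eqref{eq:fbsde1} with generator $\bar g$ and $\abs{Z_t}\le M$: since $M<N$, such a triple solves the original FBSDE, and conversely any solution with $\abs{Z_t}\le M$ solves the truncated one, so uniqueness will reduce to uniqueness of the fully Lipschitz problem.

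\emph{Picard scheme and the bound $\abs{Z^n}\le M$.} Start from $(X^0,Y^0,Z^0)=(x,0,0)$ and, given $(X^n,Y^n,Z^n)$ with $\abs{Z^n_t}\le M$, let $X^{n+1}\in{\cal S}^2(\mathbb{R}^m)$ solve the forward SDE driven by $b_\cdot(\cdot,Y^n_\cdot)$ and $(Y^{n+1},Z^{n+1})\in{\cal S}^2(\mathbb{R}^l)\times{\cal H}^2(\mathbb{R}^{l\times d})$ solve the BSDE with terminal value $h(X^{n+1}_T)$ and generator $\bar g_s(X^{n+1}_s,\,\cdot\,,Z^n_s)$, which is Lipschitz only in its $y$-slot; all iterates are then well defined by \ref{a1}--\ref{a4}. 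Since $b,\sigma,h,\bar g$ are Lipschitz, each iterate lies in $\mathbb{D}^{1,2}$, the Malliavin derivatives $(D_\tau X^{n+1},D_\tau Y^{n+1},D_\tau Z^{n+1})$ solve the linear FBSDE obtained by differentiating \eqref{eq:fbsde1}, and $Z^{n+1}_t=D_t Y^{n+1}_t$, cf.\ \citet{Che-Nam}. Using $\abs{D_\tau X^{n+1}_\tau}=\abs{\sigma_\tau}\le\lambda_2$ together with the bounds $k_1,k_2$ on $\partial b$, $k_5$ on $\partial h$, and $k_3,k_4$ on $\partial_x g,\partial_y g$ on $\{\abs z\le M\}$ (with \ref{a5} controlling the dependence of $\partial_x g$ on $(y,z)$), a Gronwall estimate for the linear FBSDE yields $\abs{D_\tau Y^{n+1}_s}\le M$ for all $0\le\tau\le s\le T$ as soon as $T$ is below a first threshold; in particular $\abs{Z^{n+1}_t}\le M$, which closes the induction, and $\bar g$ is never evaluated outside $\{\abs z\le N\}$.

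\emph{Convergence and conclusion.} Restricted to triples with $\abs{Z_t}\le M$, on which $\bar g$ has the fixed moduli $k_4$ in $y$ and $\rho(M)$ in $z$, the usual stability estimates — the $X$-increment driven by the $Y$-increment through $k_2$, the $(Y,Z)$-increment driven by the $X$- and $Z$-increments through $k_3$ and $\rho(M)$ — show that the iteration map is a contraction on ${\cal S}^2(\mathbb{R}^m)\times{\cal S}^2(\mathbb{R}^l)\times{\cal H}^2(\mathbb{R}^{l\times d})$, after inserting a weight $e^{\beta t}$ if needed, provided $T$ is below a second threshold. Its fixed point $(X,Y,Z)$ solves the truncated and hence, because $\abs{Z_t}\le M<N$, the original FBSDE, with $Z\in{\cal S}^\infty$; uniqueness holds since any solution with $\abs{Z_t}\le M$ solves the truncated equation, whose solution is unique. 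Taking $C_{k,\lambda,l,d}$ to be the minimum of the two thresholds — each depending only on $k_1,\dots,k_5$, $\lambda_2$, $l$, $d$ — gives the claim. The main difficulty is the bound $\abs{Z^n}\le M$: in the multidimensional case there is no Girsanov or comparison argument to absorb the $\partial_z g\cdot D_\tau Z^n$ term, so this bound must be extracted directly from the linear Malliavin FBSDE, and one must verify that both the admissible $T$ and the constant $M$ depend only on the Lipschitz data and the dimensions — which is exactly where \ref{a2}, \ref{a3} and \ref{a5} are used.
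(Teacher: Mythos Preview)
Your overall architecture --- truncate, run a Picard scheme, propagate a Malliavin bound $|Z^n|\le M$, then contract --- matches the paper's. The gap is in your choice of Picard map. You freeze $Z^n$ in the backward generator, solving $(Y^{n+1},Z^{n+1})$ from the driver $\bar g_s(X^{n+1}_s,\,\cdot\,,Z^n_s)$. When you Malliavin-differentiate this BSDE, the driver contributes $\partial_z\bar g\cdot D_\tau Z^n_s$, so to carry the induction $|D_\tau Y^{n+1}|\le M$ you must control $D_\tau Z^n$, not merely $Z^n$. The energy identity for $|D_\tau Y^{n+1}|^2$ produces $-|D_\tau Z^{n+1}|^2$ from the quadratic variation, which absorbs a term $\partial_z\bar g\cdot D_\tau Z^{n+1}$ via Young's inequality but does nothing for $\partial_z\bar g\cdot D_\tau Z^n$ (mismatched superscript). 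This is exactly the term you flag as ``the main difficulty'' in your last paragraph without saying how to handle it; an induction hypothesis on $|Z^n|\le M$ alone does not close.

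The paper avoids this by \emph{not} freezing $Z$ in the backward step: at each iteration it solves the full BSDE with driver $g(X^{n+1},Y^{n+1},Z^{n+1})$, invoking the standalone multidimensional result Theorem~\ref{thm:sup}. In the associated linear Malliavin BSDE the $z$-derivative term is then $\partial_z g\cdot D_\tau Z^{n+1}$, same superscript, and It\^o's formula on $|D_\tau Y^{n+1}|^2$ absorbs it into the $-|D_\tau Z^{n+1}|^2$ term (this is precisely Lemma~\ref{le:sup}). The only Malliavin input carried from the previous step is $|D_\tau Y^n|\le M$, used to bound $|D_\tau X^{n+1}|\le 4\lambda_2$, which in turn fixes the constants $A_{ij},q_{ij}$ in \ref{b2}--\ref{b3}; no control of $D_\tau Z^n$ is ever needed. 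If you insist on the frozen-$Z$ scheme you would have to add a uniform conditional bound on $E[\int_t^T|D_\tau Z^n_s|^2\,ds\mid{\cal F}_t]$ to the induction and show it propagates without growth --- conceivable for small $T$, but not carried out here, and delicate if you also want the exact constant $M=4\lambda_2 k_5\sqrt{dl}$.
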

Local existence results as Theorem \ref{thm:markov1} above have been obtained in \cite[Theorem 3]{Fro-Imk} and \cite[Theorem 2.1]{fbsdedq} in essentially different settings and with different methods.
A natural question is to find conditions under which the above result can be extended to obtain global solvability.
\citet{Fro-Imk} propose a concatenation procedure allowing to prove existence of solutions to a fully coupled FBSDE on a "maximal interval".
In the present setting, under additional assumptions, a pasting method based on PDEs allows to get global existence and uniqueness for the FBSDE \eqref{eq:fbsde1}.
The proof is also given in Section \ref{sec:FBSDE}.
\begin{enumerate}[label = (\textsc{A4'}), leftmargin = 40pt]
	\item $g: [0,T]\times \mathbb{R}^m \times \mathbb{R}^{l}\times \mathbb{R}^{l\times d}  \to \mathbb{R}^{l}$ is a continuous function satisfying  $g_\cdot(0,0,0) \in L^2([0,T])$, and there exist $k_3, k_4\ge 0$ and a nondecreasing function $\rho: \mathbb{R}_+ \to \mathbb{R}_+$ such that
	\begin{align*}
		\abs{g_t^i(x, y,z) - g_t^i(x',y',z')} &\le k_3\abs{x - x'}+ k_4\abs{y - y'}+ \rho\left(\abs{z}\vee \abs{z'}\right)\abs{z^i - (z')^i}
	\end{align*}
	for all $i = 1, \dots, l$; $x, x' \in \mathbb{R}^m$, $y, y' \in \mathbb{R}^{l}$ and $z,z' \in \mathbb{R}^{l\times d}$. \label{a4prime}
\end{enumerate}
\begin{theorem}
\label{thm:fbsde-global}
	Assume that \ref{a1}, \ref{a2}, \ref{a3}, \ref{a4prime}, and \ref{a5} hold and there exist $\lambda_3> 0$; $\lambda_4,\lambda_5 \ge 0$
	such that\footnote{$\sigma_t^\ast$ is the transpose matrix of $\sigma_t$.}	
\begin{equation}
\label{eq:global-condi}
\begin{cases}
 \abs{b_t(x,y)}& \le \lambda_1(1 +  \abs{y})\\
	 \langle x, \sigma_t\sigma_t^\ast x\rangle& \ge \lambda_3|x|^2\\
	\abs{g_t(x,y,z)} &\le \lambda_4(1 + \abs{y} + \rho(\abs{z})\abs{z})\\
 \abs{h(x)} &\le \lambda_5
\end{cases}
\end{equation}
for all  $t \in [0,T]$, $x, x' \in \mathbb{R}^m$, $y, y' \in \mathbb{R}^{l}$ and $z,z' \in \mathbb{R}^{l\times d}$,
then the FBSDE \eqref{eq:fbsde1} has a unique global solution $(X, Y, Z) \in {\cal S}^2(\mathbb{R}^m)\times {\cal S}^2(\mathbb{R}^{l})\times {\cal S}^\infty(\mathbb{R}^{l\times d})$ such that $|Z_t|\leq \bar{M}$ for some constant $\bar{M} \ge 0$.
\end{theorem}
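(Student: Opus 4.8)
The plan is to linearise the superquadratic growth by a truncation in $z$, to solve the resulting Lipschitz Markovian FBSDE globally through the PDE/decoupling-field machinery of \cite{Delarue} (this is where the non-degeneracy of $\sigma$ is used), and then to establish an a priori bound on the control process that does not depend on the truncation level, so that the truncated equation coincides with \eqref{eq:fbsde1}.

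First I would fix, for $n\in\N$, a $1$-Lipschitz map $\kappa_n\colon\R^{l\times d}\to\R^{l\times d}$ with $\kappa_n(z)=z$ on $\{|z|\le n\}$ and $|\kappa_n|\le n+1$, and set $g^n_t(x,y,z):=g_t(x,y,\kappa_n(z))$. By \ref{a4prime}, $g^n$ is globally Lipschitz in $(x,y,z)$ with constant $\max\{k_3,k_4,\rho(n+1)\}$, it still satisfies \ref{a5}, and $|g^n_t(x,y,z)|\le\lambda_4(1+|y|+\rho(n+1)(n+1))$. Since $\sigma$ is bounded, measurable and, by \eqref{eq:global-condi}, uniformly non-degenerate, and $b,h$ are Lipschitz with at most affine growth in $y$, the Markovian FBSDE with data $(b,\sigma,g^n,h)$ falls within the scope of \cite{Delarue}: it has a unique global solution $(X^n,Y^n,Z^n)\in{\cal S}^2(\R^m)\times{\cal S}^2(\R^l)\times{\cal H}^2(\R^{l\times d})$, and there is a bounded decoupling field $u^n\colon[0,T]\times\R^m\to\R^l$, Lipschitz in the space variable, with $Y^n_t=u^n(t,X^n_t)$ and $Z^n_t=\nabla_x u^n(t,X^n_t)\sigma_t$ for $P\otimes dt$-almost every $(\omega,t)$. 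The Lipschitz bound produced at this stage still depends on $n$ through $\rho(n+1)$, and the heart of the proof is to remove this dependence.

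To that end, write $L^n(t):=\big(\sum_{j=1}^l\mathrm{Lip}(u^{n,j}(t,\cdot))^2\big)^{1/2}$ for the aggregate of the spatial Lipschitz constants of the components of $u^n(t,\cdot)$, so that $|Z^n_t|\le L^n(t)\lambda_2$. I would partition $[0,T]$ into intervals $[t_{i-1},t_i]$ of a small length $\delta$ fixed below and estimate $L^n$ backwards. On $[t_{i-1},t_i]$ the field $u^n$ is the decoupling field of the FBSDE with terminal condition $u^n(t_i,\cdot)$; comparing its solutions $(X^x,Y^x,Z^x)$ and $(X^{x'},Y^{x'},Z^{x'})$ started at $x$ and $x'$ at time $t_{i-1}$, and using that $\sigma$ does not depend on $(X,Y)$ so that $\Delta X:=X^x-X^{x'}$ solves a pathwise integral equation, \ref{a1} together with $|\Delta Y_r|\le L^n(r)|\Delta X_r|$ gives the pathwise bound $|\Delta X_r|\le e^{(k_1+k_2\bar L)\delta}|x-x'|$, where $\bar L:=\sup_{[t_{i-1},t_i]}L^n$. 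For each component $j$, $\Delta Y^j$ solves a one-dimensional BSDE whose generator increment is, by \ref{a4prime}, dominated by $k_3|\Delta X_r|+k_4|\Delta Y_r|$ plus a term $\langle\gamma^j_r,\Delta Z^j_r\rangle$ with $|\gamma^j_r|\le\rho(|Z^x_r|\vee|Z^{x'}_r|)\le\rho(\bar L\lambda_2)$; because this process is bounded, the change of measure absorbing $\langle\gamma^j,\Delta Z^j\rangle$ is a genuine probability change, under which $|\Delta Y^j_{t_{i-1}}|\le E^{Q^j}\big[\mathrm{Lip}(u^{n,j}(t_i,\cdot))|\Delta X_{t_i}|+\int_{t_{i-1}}^{t_i}(k_3+k_4\bar L)|\Delta X_r|\,dr\big]$. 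As everything inside is already bounded pathwise, this leads to a backward recursion of the type $L^n(t_{i-1})\le e^{(k_1+k_2\bar L)\delta}\big(L^n(t_i)+\sqrt l\,\delta(k_3+k_4\bar L)\big)$ whose coefficients involve only $k_1,\dots,k_5$, $\lambda_2$ and $l$, but neither $\rho$ nor $n$. Starting from $L^n(T)\le\sqrt l\,k_5$ and choosing $\delta$ small enough (depending on those same constants and on $T$), the recursion is iterated over finitely many steps to give $\sup_{t\in[0,T]}L^n(t)\le\Lambda$ for a constant $\Lambda$ independent of $n$. Assumption \ref{a5} is used here, and in the invocation of \cite{Delarue}, to control the coupled structure when the decoupling field is differentiated and compared.

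It then remains to conclude. By the above and the representation $Z^n=\nabla_xu^n(\cdot,X^n)\sigma$ one has $|Z^n_t|\le\Lambda\lambda_2=:\bar M$ for all $n$; with $|h|\le\lambda_5$ and $|g^n_t(X^n_t,Y^n_t,Z^n_t)|\le\lambda_4(1+|Y^n_t|+\rho(\bar M)\bar M)$, Gronwall's inequality bounds $\|Y^n\|_{{\cal S}^\infty}$ uniformly in $n$, and then $|b_t(X^n_t,Y^n_t)|\le\lambda_1(1+\|Y^n\|_{{\cal S}^\infty})$ bounds $\|X^n\|_{{\cal S}^2}$. Fixing any $n>\bar M$ we get $|Z^n_t|\le\bar M<n$, hence $\kappa_n(Z^n_t)=Z^n_t$ and $g^n_t(X^n_t,Y^n_t,Z^n_t)=g_t(X^n_t,Y^n_t,Z^n_t)$, so $(X^n,Y^n,Z^n)$ solves \eqref{eq:fbsde1} on $[0,T]$, lies in ${\cal S}^2(\R^m)\times{\cal S}^2(\R^l)\times{\cal S}^\infty(\R^{l\times d})$ and satisfies $|Z_t|\le\bar M$. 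For uniqueness, any solution $(X,Y,Z)$ of \eqref{eq:fbsde1} with $|Z|\le R$ also solves the Lipschitz system with data $(b,\sigma,g^{n'},h)$ for $n'>\max\{R,\bar M\}$, which by \cite{Delarue} has a unique solution; hence all solutions with essentially bounded control coincide. The decisive step is the uniform Lipschitz estimate of the previous paragraph: it is there that the arbitrary growth of $g$ in $z$ has to be neutralised — which works precisely because the truncated problem already provides an (a priori $n$-dependent) bound on $Z^n$, making the change-of-measure reduction licit and the resulting constants $\rho$-free — and it is there that one is forced to paste, the feedback of $Y$ into the forward equation (the constant $k_2$ being positive) obstructing a one-shot estimate on all of $[0,T]$.
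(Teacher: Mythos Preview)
Your strategy is close to the paper's and the key ingredients are the same: truncate $g$ in $z$, invoke Delarue's global result for the resulting Lipschitz Markovian FBSDE (this is where the non-degeneracy of $\sigma$ enters), and then exploit the diagonal structure \ref{a4prime} via a componentwise Girsanov change of measure to obtain a Lipschitz bound on the decoupling field that is free of $\rho$ and of the truncation level.

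There are, however, two differences worth noting. First, the paper does \emph{not} obtain the Lipschitz bound on the decoupling field by a backward recursion over small subintervals; it works directly on $[t,T]$, bounds $|\tilde X^x_s-\tilde X^{x'}_s|$ pathwise in terms of $|\tilde X^x_t-\tilde X^{x'}_t|$ and $\int_t^s|\tilde Y^x_u-\tilde Y^{x'}_u|\,du$, substitutes, and closes the estimate by a Gronwall/ODE argument giving an explicit constant $K_5$ depending only on $k_1,\dots,k_5,l,T$. Second, to translate the Lipschitz bound on $\theta$ into a bound on $Z$, the paper does \emph{not} use the representation $Z=\nabla_x u\,\sigma$; instead it partitions $[0,T]$ into intervals of length $\bar C_{k,\lambda,l,d}$ (the constant of Theorem~\ref{thm:markov1} with $k_5$ replaced by $K_5$), applies Theorem~\ref{thm:markov1} on each subinterval with terminal condition $\theta(t_i,\cdot)$, obtains local solutions with $|Z^i|\le \bar M=4\lambda_2K_5\sqrt{dl}$, and identifies them with the restrictions of Delarue's global solution. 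So the pasting in the paper is of the \emph{local solutions produced by Theorem~\ref{thm:markov1}}, not of Lipschitz estimates. In particular, assumption \ref{a5} is used in the paper only through Theorem~\ref{thm:markov1} (for the Malliavin differentiability of the Picard iterates); it is not needed for Delarue's theorem nor for the Girsanov step, contrary to what you suggest.

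Your recursion, as written, has a genuine circularity: the inequality $L^n(t_{i-1})\le e^{(k_1+k_2\bar L)\delta}\bigl(L^n(t_i)+\sqrt l\,\delta(k_3+k_4\bar L)\bigr)$ has $\bar L=\sup_{[t_{i-1},t_i]}L^n$ on the right-hand side, and the same estimate with $t_{i-1}$ replaced by any $t\in[t_{i-1},t_i]$ gives only $\bar L\le \psi(\bar L)$ for an increasing function $\psi$ with $\psi(s)>s$ for all $s>0$, which yields no upper bound. Breaking this requires either an a priori continuity argument for $t\mapsto L^n(t)$ whose modulus is $n$-independent, or---more simply---replacing the recursion by the paper's direct Gronwall on $[t,T]$, which avoids the problem altogether. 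Your endgame (choose the truncation level larger than the resulting $Z$-bound and invoke uniqueness of the Lipschitz problem) is fine and equivalent to the paper's.
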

\begin{remark}
The following counterexample shows that the additional conditions in Theorem \ref{thm:fbsde-global} cannot be dropped without violating global solvability.
Consider the FBSDE
\begin{equation*}
	\begin{cases}
	X_t = \int_0^tY_u\,du\\
	Y_t =  \int_t^TkX_u\,du - \int_t^TZ_u\,dW_u.
	\end{cases}
\end{equation*}
This equation can be rewritten as
\begin{equation}
\label{eq:delay}
	Y_t = \int_t^T\int_0^skY_u\,du\,ds - \int_t^TZ_u\,dW_u.
\end{equation}
It has been shown in \cite[Example 3.2]{Del-Imk} that if $T\sqrt{k} < \frac{\pi}{2}$ then the BSDE with time-delayed generator \eqref{eq:delay} has a unique solution whereas if $T\sqrt{k} = \frac{\pi}{2}$, \eqref{eq:delay} may not have any solutions and if it does have one, there are infinitely many.
\end{remark}

Theorem \ref{thm:fbsde-global} extends to fully coupled generators provided that a more special structure is assumed.
%In fact, restricting ourselves to $l=2$, consider the condition
%\begin{enumerate}[label = (\textsc{A4''}), leftmargin = 40pt]
%	\item $g: [0,T]\times \mathbb{R}^m \times \mathbb{R}^{2}\times \mathbb{R}^{2\times d}  \to \mathbb{R}^{2}$ is a continuous function satisfying  $g_\cdot(0,0,0) \in L^2([0,T])$, and there exist $k_3, k_4\ge 0$ and a nondecreasing function $\rho: \mathbb{R}_+ \to \mathbb{R}_+$ such that
%	\begin{align*}
%		&\abs{g_t^1(x, y,z)+g^2(x,y,z) - g^1_t(x',y',z')-g^2_t(x',y',z')} \\
%		&\qquad \le k_3\abs{x - x'}+ k_4\abs{y- y'}+ \rho\left(\abs{z}\vee \abs{z'}\right)\abs{z^1 +z^2 - (z')^1 - (z')^2},
%	\end{align*}
%	\begin{align*}
%		&\abs{g_t^1(x, y,z)-g^2(x,y,z) - g^1_t(x',y',z')+g^2_t(x',y',z')} \\
%		&\qquad \le k_3\abs{x - x'}+ k_4\abs{y- y'}+ \rho\left(\abs{z}\vee \abs{z'}\right)\abs{z^1 -z^2 - (z')^1 + (z')^2}
%	\end{align*}
%	for all $x, x' \in \mathbb{R}^m$, $y, y' \in \mathbb{R}^{2}$ and $z,z' \in \mathbb{R}^{2\times d}$. \label{a4sec}
%\end{enumerate}
The proof of the next proposition is given in Section \ref{sec:FBSDE1}.
\begin{proposition}
\label{thm:prop_coupled}
	If there exist an invertible matrix $\Gamma\in\mathbb{R}^{n\times n}$ and $\lambda_3> 0$; $\lambda_4,\lambda_5 \ge 0$ such that \ref{a1}, \ref{a2}, \ref{a3}, \ref{a4prime}, \ref{a5} and \eqref{eq:global-condi} hold for $\tilde{h}:=\Gamma h$, $\tilde{g}_t(x,y,z):=\Gamma g_t(x,\Gamma^{-1}y,\Gamma^{-1}z)$ and $\tilde{b}_t(x,y):=b_t(x,\Gamma^{-1}y)$, then the FBSDE \eqref{eq:fbsde1} has a unique global solution $(X, Y, Z) \in {\cal S}^2(\mathbb{R}^m)\times {\cal S}^2(\mathbb{R}^{2})\times {\cal S}^\infty(\mathbb{R}^{2\times d})$ such that $|Z_t|\leq \bar{M}$ for some constant $\bar{M} \ge 0$.
\end{proposition}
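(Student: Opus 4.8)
The plan is to deduce Proposition~\ref{thm:prop_coupled} from Theorem~\ref{thm:fbsde-global} by the linear change of variables induced by $\Gamma$. The key observation is that a triple $(X,Y,Z)$ solves \eqref{eq:fbsde1} with data $(b,\sigma,g,h)$ if and only if $(X,\bar Y,\bar Z):=(X,\Gamma Y,\Gamma Z)$ solves \eqref{eq:fbsde1} with data $(\tilde b,\sigma,\tilde g,\tilde h)$. Indeed, the forward equation is left untouched apart from the identity $b_s(X_s,Y_s)=b_s(X_s,\Gamma^{-1}\bar Y_s)=\tilde b_s(X_s,\bar Y_s)$ (the diffusion term does not depend on $Y$, so $\sigma$ is unchanged), while multiplying the backward equation by $\Gamma$ and using linearity of the stochastic integral yields
\begin{equation*}
\bar Y_t=\Gamma h(X_T)+\int_t^T\Gamma g_s\!\left(X_s,\Gamma^{-1}\bar Y_s,\Gamma^{-1}\bar Z_s\right)ds-\int_t^T\bar Z_s\,dW_s=\tilde h(X_T)+\int_t^T\tilde g_s(X_s,\bar Y_s,\bar Z_s)\,ds-\int_t^T\bar Z_s\,dW_s.
\end{equation*}
Since $\Gamma$ is invertible, the map $(X,Y,Z)\mapsto(X,\Gamma Y,\Gamma Z)$ is a bijection of $\mathcal{S}^2(\mathbb{R}^m)\times\mathcal{S}^2(\mathbb{R}^l)\times\mathcal{S}^\infty(\mathbb{R}^{l\times d})$ onto itself which, in particular, sends solutions with a bounded control process to solutions with a bounded control process and conversely.

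Next I would simply invoke Theorem~\ref{thm:fbsde-global} for the transformed data. By assumption $(\tilde b,\sigma,\tilde g,\tilde h)$ satisfies \ref{a1}, \ref{a2}, \ref{a3}, \ref{a4prime}, \ref{a5} together with the growth conditions \eqref{eq:global-condi}, so the FBSDE with these coefficients admits a unique global solution $(X,\bar Y,\bar Z)\in\mathcal{S}^2(\mathbb{R}^m)\times\mathcal{S}^2(\mathbb{R}^l)\times\mathcal{S}^\infty(\mathbb{R}^{l\times d})$ with $|\bar Z_t|\le\bar M$ for some constant $\bar M\ge0$. Setting $(X,Y,Z):=(X,\Gamma^{-1}\bar Y,\Gamma^{-1}\bar Z)$ then produces a global solution of the original system \eqref{eq:fbsde1}, and $|Z_t|\le\|\Gamma^{-1}\|\,\bar M$, so its control process is still bounded. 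For uniqueness, any other solution $(X',Y',Z')$ of \eqref{eq:fbsde1} with $\sup_t|Z'_t|<\infty$ would, via $(X',\Gamma Y',\Gamma Z')$, give a bounded-control solution of the transformed system, which by Theorem~\ref{thm:fbsde-global} must coincide with $(X,\bar Y,\bar Z)$; applying $\Gamma^{-1}$ gives $(X',Y',Z')=(X,Y,Z)$.

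The argument is essentially bookkeeping, and I do not expect any genuine obstacle: the substance of the statement is already contained in Theorem~\ref{thm:fbsde-global}, the role of $\Gamma$ being only to widen the admissible class of coefficients by permitting a preliminary linear (e.g.\ diagonalizing) reduction of the backward data. The only points deserving a line of verification are that $\sigma$ is truly unaffected by the change of variables and that the $\mathcal{S}^\infty$ bound and the $\mathcal{S}^2$ integrability are preserved under left multiplication by the fixed invertible matrices $\Gamma$ and $\Gamma^{-1}$, which is immediate from the estimate $|\Gamma^{\pm1}z|\le\|\Gamma^{\pm1}\|\,|z|$ on the relevant finite-dimensional spaces.
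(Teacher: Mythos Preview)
Your proof is correct and follows essentially the same approach as the paper: apply Theorem~\ref{thm:fbsde-global} to the transformed data $(\tilde b,\sigma,\tilde g,\tilde h)$ to obtain a unique bounded-control solution, then pull back via $\Gamma^{-1}$ to recover the solution of the original system. Your write-up is in fact more thorough than the paper's, which records the same change of variables but omits the explicit verification of the equivalence and the uniqueness argument.
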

The point of the above proposition is that global solvability can be obtained for a generator that does not satisfy the "diagonally superquadratic" growth condition \ref{a4prime} provided that the transformed generator $\tilde{g}$ does.
Moreover, notice that Theorem \ref{thm:fbsde-global} and Proposition \ref{thm:prop_coupled} guarantee existence of a decoupling field, see \cite{Fro-Imk}. In particular, the boundedness of $Z$ guarantees a uniform Lipschitz property of the decoupling field.

Theorem \ref{thm:markov1} relies on an existence result for multidimensional BSDEs presented in \citet{nam} and revisited in the next section.
\section{Multidimensional BSDEs with bounded Malliavin derivatives}% and the proof of Theorem \ref{thm:sup}}
\label{Sec:mBSDE}

Let us introduce the spaces of Malliavin differentiable random variables and stochastic processes
${\cal D}^{1,2}(\mathbb{R}^l)$ and ${\cal L}^{1,2}_a(\mathbb{R}^l)$. For a thorough treatment of the theory of Malliavin calculus we refer to \citet{Nualart2006}.
Let ${\cal M}$ be the class of smooth random variables $\xi=(\xi^1,\dots,\xi^l)$ of the form
\begin{equation*}
	\xi^i = \varphi^i\Big(\int_0^T h^{i1}_s\,dW_s, \dots, \int_0^T h^{in}_s\,dW_s \Big)
\end{equation*}
where $\varphi^i$ is in the space $C_p^\infty(\mathbb{R}^{n};\mathbb{R})$  of infinitely continuously differentiable functions whose partial derivatives have polynomial growth, $h^{i1}, \dots, h^{in} \in L^2([0,T]; \mathbb{R}^d)$ and $n\ge 1$.
For every $\xi$ in ${\cal M}$ let the operator $D = (D^1, \dots, D^d):{\cal M}\to L^2(\Omega\times [0,T];\mathbb{R}^d)$ be  given by
\begin{equation*}
	D_t\xi^i := \sum_{j=1}^n\frac{\partial \varphi^i}{\partial x_{j}}\Big(\int_0^T h^{i1}_s\,dW_s, \dots, \int_0^T h^{in}_s\,dW_s \Big)h^{ij}_t, \quad 0\le t\le T,\,\, 1\le i\le l,
\end{equation*}
and the norm $	\norm{\xi}_{1,2} := (E[\abs{\xi}^2 + \int_0^T\abs{D_t\xi}^2\,dt  ] )^{1/2}$.
As shown in \citet{Nualart2006}, the operator $D$ extends to the closure ${\cal D}^{1,2}(\mathbb{R}^l)$ of the set ${\cal M}$ with respect to the norm $\norm{\cdot}_{1,2}$.
A random variable $\xi$ is Malliavin differentiable if $\xi \in {\cal D}^{1,2}(\mathbb{R}^l)$ and we denote by $D_t\xi$ its Malliavin derivative.
%Note that if $\xi$ is ${\cal F}_t$ measurable, then $D_u\xi = 0$ for all $u \in (t, T]$.
Denote by ${\cal L}^{1,2}_a(\mathbb{R}^{l})$ the space of processes $Y \in {\cal H}^2(\mathbb{R}^{l})$ such that
$Y_t \in {\cal D}^{1,2}(\mathbb{R}^{l})$ for all $t \in [0,T]$, the process $DY_t$ admits a square integrable progressively measurable version and
\begin{equation*}
	\norm{Y}_{{\cal L}^{1,2}_a(\mathbb{R}^l)}^2 := \norm{Y}_{{\cal H}^2(\mathbb{R}^l)} + E\Big[\int_0^T\int_0^T\abs{D_r Y_t}^2\,dr\,dt \Big] < \infty.
\end{equation*}

We next consider a system of superquadratic BSDEs of the form
\begin{equation}\label{sup}
Y_t=\xi+\int_{t}^{T}g_u(Y_u,Z_u)du-\int_{t}^{T}Z_udW_u
\end{equation}
satisfying the following conditions:
\begin{enumerate}[label = (\textsc{B1}), leftmargin = 40pt]
	\item $g: \Omega \times [0,T]\times \mathbb{R}^{l}\times \mathbb{R}^{{l}\times d}  \to \mathbb{R}^{l}$ is a measurable function and there exist a constant $B\in\mathbb{R}_+$ and a nondecreasing function $\rho: \mathbb{R}_+ \to \mathbb{R}_+$ such that
	\begin{align*}
		\abs{g_t(y,z) - g_t(y',z')} &\le B\abs{y - y'}+ \rho\left(\abs{z}\vee \abs{z'}\right)\abs{z - z'}
	\end{align*}
	for all $t\in [0,T]$, $y, y' \in \mathbb{R}^{l}$ and $z,z' \in \mathbb{R}^{{l}\times d}$\label{b1}.
\end{enumerate}	
\begin{enumerate}[label = (\textsc{B2}), leftmargin = 40pt]
	\item $\xi \in \mathcal{D}^{1,2}(\mathbb{R}^{l})$ and there exist constants $A_{ij}\ge 0$ such that $|D^j_t\xi^i| \le A_{ij}$
 for all $i = 1,\dots, {l}$, $j=1,\ldots,d$ and $t \in [0,T]$.\label{b2}
\end{enumerate}
\begin{enumerate}[label = (\textsc{B3}), leftmargin = 40pt]
	\item $g_\cdot(0, 0) \in\mathcal{H}^4(\mathbb{R}^l)$ and there exist
   Borel-measurable functions $q_{ij}:[0,T]\rightarrow\mathbb{R}_{+}$ satisfying $\int_{0}^{T}q^2_{ij}(t)dt<\infty$ and for every pair $(y,z)\in\mathbb{R}^{l}\times\mathbb{R}^{{l}\times d}$ with
 \begin{equation*}
 |z|\leq Q:= \sqrt{2\sum_{j=1}^d \Bigg(\sum_{i=1}^{l}A_{ij}^2+\sum_{i=1}^{l}\int_0^{T} q^2_{ij}(t) dt\Bigg)}
	 \end{equation*}
  it holds
\begin{itemize}
\item $g_\cdot(y,z)\in \mathcal{L}^{1,2}_a(\mathbb{R}^{l})$ with $ |D^j_ug^i_t(y, z)|\le q_{ij}(t)$
for all $i = 1, \dots, {l}$, $j=1,\ldots,d$ and $u\in[0,T]$,
\item for almost all $u\in[0,T]$ one has
	\begin{equation*}
		\abs{D_ug_t(y, z) - D_ug_t(y', z')} \le K_u(t)\left( \abs{y - y'} + \abs{z - z'} \right)
	\end{equation*}
for all $t\in [0,T]$, $y, y' \in \mathbb{R}^{l}$ and $z,z' \in \mathbb{R}^{{l}\times d}$
for some $\mathbb{R}_+$-valued adapted process $(K_u(t))_{t\in [0,T]}$ satisfying $\int_0^T\norm{K_u}_{{\cal H}^4(\mathbb{R})}^4\,du < \infty$.
\end{itemize}\label{b3}
\end{enumerate}

The following is an extension of \citet[Theorem 2.2]{Che-Nam} to the multidimensional case.
It was proved in \citet{nam} under slightly different assumptions.
We give the proof for the sake of completeness.

\begin{theorem}\label{thm:sup}
	Assume that \ref{b1}-\ref{b3} hold and $T\leq \frac{\log (2)}{2B+\rho^2(Q)+1}$. Then the BSDE \eqref{sup} admits a unique solution in $\mathcal{S}^{4}(\mathbb{R}^{l})\times\mathcal{S}^{\infty}(\mathbb{R}^{{l}\times d})$ and $|Z_t|\le Q$.
	% \begin{equation*}
	% 	|Z^{j}_t|^2\leq 2\left(\sum_{i=1}^{l}A^2_{ij}+\sum_{i=1}^{l}\int_0^{T}q^2_{ij}(s)ds\right)\quad j = 1, \dots, d.
	% \end{equation*}
\end{theorem}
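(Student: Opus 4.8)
The plan is to combine a truncation argument with a fixed-point scheme, using Malliavin calculus to control the truncation level. Since \ref{b1} only gives a \emph{local} Lipschitz bound in $z$ (through $\rho$), the first step is to replace $g$ by a truncated generator $\tilde g_t(y,z) := g_t(y, \kappa_Q(z))$, where $\kappa_Q$ is the projection onto the centered ball of radius $Q$ in $\mathbb{R}^{l\times d}$. Then $\tilde g$ is globally Lipschitz in $(y,z)$ with constants $B$ and $\rho(Q)$, and \ref{b3} holds for $\tilde g$ as well because the truncation is $1$-Lipschitz and leaves the Malliavin-derivative bounds $q_{ij}$ and $K_u$ unchanged. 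By the classical theory of Lipschitz BSDEs (under \ref{b1}, \ref{b2} and the integrability $g_\cdot(0,0)\in\mathcal H^4$, together with the bound on $T$, which in fact is not even needed for existence in the Lipschitz case), the truncated equation
\begin{equation*}
Y_t=\xi+\int_t^T\tilde g_u(Y_u,Z_u)\,du-\int_t^T Z_u\,dW_u
\end{equation*}
has a unique solution $(Y,Z)\in\mathcal S^4(\mathbb{R}^l)\times\mathcal H^4(\mathbb{R}^{l\times d})$. The whole point is then to show that this solution automatically satisfies $|Z_t|\le Q$, so that $\tilde g_t(Y_t,Z_t)=g_t(Y_t,Z_t)$ and $(Y,Z)$ solves \eqref{sup}.

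The second and central step is the a priori bound on $Z$ via Malliavin calculus. Under \ref{b2}, \ref{b3} and the Lipschitz property of $\tilde g$, one shows $Y\in\mathcal L^{1,2}_a(\mathbb R^l)$ and that the Malliavin derivative $(D_r^j Y_t)$ solves a linear BSDE: for $r\le t$,
\begin{equation*}
D_r^jY_t = D_r^j\xi + \int_t^T\Big(\nabla_y \tilde g_u(Y_u,Z_u)\,D_r^jY_u + \nabla_z\tilde g_u(Y_u,Z_u)\,D_r^jZ_u + (D_r^j\tilde g)_u(Y_u,Z_u)\Big)\,du - \int_t^T D_r^jZ_u\,dW_u,
\end{equation*}
while for $r>t$ one has $D_r^jY_t=0$. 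The standard identification $Z_t = D_tY_t$ (the diagonal of the Malliavin derivative, which is the key observation from \citet{Che-Nam}) then reduces the task to bounding $|D_r^jY_t|$. Applying Itô to $|D_r^jY_t|^2$, using the Lipschitz bound $|\nabla_y\tilde g|\le B$, $|\nabla_z\tilde g|\le \rho(Q)$, the componentwise bounds $|D_r^j\xi^i|\le A_{ij}$ and $|D_r^j\tilde g^i_u|\le q_{ij}(u)$, a Gronwall / BMO-type estimate on the short interval $[0,T]$ gives a bound of the form $|D_r^jY_t|^2 \le e^{(2B+\rho^2(Q)+1)T}\big(\textup{something}\le \tfrac12 Q^2\big)$ per component; summing over $i$ and $j$ and using $T\le \frac{\log 2}{2B+\rho^2(Q)+1}$ makes the exponential factor at most $2$, yielding exactly $\sum_{i,j}|D_r^jY_t^i|^2\le Q^2$, i.e. $|Z_t|\le Q$ as required. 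This closes the loop: $(Y,Z)$ solves the original BSDE \eqref{sup}, and since $Z$ is bounded, $Y\in\mathcal S^4$ follows and $Z\in\mathcal S^\infty$.

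For uniqueness, suppose $(Y,Z)$ and $(Y',Z')$ are two solutions of \eqref{sup} with $|Z_t|,|Z'_t|\le Q$. On the set $\{|z|\le Q\}$ the generator $g$ is genuinely Lipschitz in $(y,z)$ with constants $B,\rho(Q)$, so the difference $(\delta Y,\delta Z)$ solves a linear BSDE with bounded coefficients, and the usual a priori estimate (again on the short horizon $T$, where the contraction constant is controlled by the same exponential) forces $\delta Y=\delta Z=0$. The main obstacle — and the step requiring the most care — is the derivation of the linear BSDE for $DY$ and the rigorous justification that its solution is bounded: one must verify the Malliavin differentiability of $(Y,Z)$ (e.g. by differentiating a Picard iteration and passing to the limit in $\mathcal L^{1,2}_a$), check the a.e.-in-$r$ regularity needed to identify $Z_t=D_tY_t$, and control the $D_r Z$ term, which does not appear explicitly in the final bound but must be handled when applying Itô's formula (it is absorbed by the $-\int |D_rZ_u|^2$ term against the cross term via Young's inequality, which is precisely where the "$+1$" in the denominator of the time bound comes from). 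The integrability assumptions in \ref{b3} involving $\mathcal H^4$-norms are exactly what is needed to make these manipulations legitimate in $L^2$.
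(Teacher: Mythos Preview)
Your overall architecture---truncate to a globally Lipschitz $\tilde g$, solve, bound $Z$ via the Malliavin derivative of $Y$, then untruncate---is exactly the paper's strategy, and your heuristics for the It\^o/Gronwall estimate (including the role of the ``$+1$'' from Young's inequality) are on target. But there is a real gap in the step where you write the linear BSDE for $D_r^jY$ with coefficients $\nabla_y\tilde g$ and $\nabla_z\tilde g$. Under \ref{b1} the generator is only Lipschitz, not $C^1$, and your truncation $\kappa_Q$ is itself nondifferentiable on $\{|z|=Q\}$; so those gradients need not exist, and the El~Karoui--Peng--Quenez differentiability result you are implicitly invoking requires a continuously differentiable generator. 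Your suggested fix (differentiate a Picard iteration and pass to the limit) runs into the same problem: to differentiate each iterate you again need the chain rule for $\tilde g$.

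The paper closes this gap by inserting a mollification layer: it first proves the Malliavin bound under the stronger hypothesis \ref{b1prime} ($g\in C^1$ with bounded derivatives), recorded as Lemma~\ref{le:sup}, and then approximates the merely Lipschitz $\tilde g$ by convolutions $g^n=\tilde g*\beta^n$. Each $g^n$ satisfies \ref{b1prime} with the \emph{same} constants $B$ and $\rho(Q)$ and inherits \ref{b3prime}, so Lemma~\ref{le:sup} applies to the BSDE with data $(g^n,\xi)$ and gives $|Z^n_t|\le Q$ uniformly in $n$. One then lets $n\to\infty$: $(Y^n,Z^n)\to(Y,Z)$ in $\mathcal S^2\times\mathcal H^2$ by stability of Lipschitz BSDEs, and the uniform bound on $Z^n$ survives in the limit. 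Everything else in your proposal (the identification $Z_t=D_tY_t$, the componentwise estimate and the summation giving $|Z_t|\le Q$, the uniqueness argument) matches the paper.
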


% In this section we consider the case where the terminal condition has no specific structural properties such as dependence of the terminal condition on a forward diffusion.
Consider the following stronger versions of the conditions \ref{b1} and \ref{b3}:
\begin{enumerate}[label = (\textsc{B1'}), leftmargin = 40pt]
	\item $g$ is  continuously differentiable in $(y,z)$ and there exist constants $B\in\mathbb{R}_+$ and $\rho\in\mathbb{R}_+$ such that $\abs{\partial_{y}g_t(y,z)} \le B$ and $\abs{\partial_{z}g_t(y,z)} \le \rho$
	for all $t\in [0,T]$, $y, y' \in \mathbb{R}^{l}$ and $z,z' \in \mathbb{R}^{{l}\times d}$.\label{b1prime}
\end{enumerate}	
\begin{enumerate}[label = (\textsc{B3'}), leftmargin = 40pt]
	\item The condition \ref{b3} holds for all $(y,z)\in\mathbb{R}^{l}\times\mathbb{R}^{{l}\times d}$.\label{b3prime}
\end{enumerate}
\begin{lemma}\label{le:sup}
	If \ref{b1prime}, \ref{b2} and \ref{b3prime} hold, then the BSDE \eqref{sup} admits a unique solution $(Y,Z)\in\mathcal{S}^4(\mathbb{R}^{l})\times\mathcal{H}^4(\mathbb{R}^{{l}\times d})$ and
	\begin{equation}
	\label{eq:boundZlemma}
		|Z^{j}_t|^2\leq \Bigg(\sum_{i=1}^{l}A^2_{ij}+\sum_{i=1}^{l}\int_t^{T}q^2_{ij}(s)e^{-\left(2B+\rho^2+1\right)(T-s)}ds\Bigg)e^{\left(2B+\rho^2+1\right)(T-t)}\quad \mbox{for all }j = 1, \dots, d.
	\end{equation}
\end{lemma}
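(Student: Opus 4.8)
The plan is as follows. Under \ref{b1prime} the generator $g_t$ is globally Lipschitz in $(y,z)$, with Lipschitz constants $B$ in $y$ and $\rho$ in $z$; moreover \ref{b2} forces $\xi-E[\xi]$ to be a stochastic integral with bounded integrand, so $\xi\in L^p$ for every $p$, and $g_\cdot(0,0)\in\mathcal{H}^4$ by \ref{b3prime}. Hence the classical theory of Lipschitz BSDEs together with the standard $L^p$ a priori estimates yields a unique solution $(Y,Z)\in\mathcal{S}^4(\mathbb{R}^l)\times\mathcal{H}^4(\mathbb{R}^{l\times d})$. Everything beyond this point is devoted to the estimate \eqref{eq:boundZlemma}, which is the multidimensional analogue of the bound underlying \cite[Theorem 2.2]{Che-Nam}.

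Next I would invoke the Malliavin differentiability of BSDE solutions. Since $\xi\in\mathcal{D}^{1,2}(\mathbb{R}^l)$, $g$ is $C^1$ in $(y,z)$ with bounded derivatives by \ref{b1prime}, and $g$ is Malliavin differentiable with the regularity granted by \ref{b3prime}, the solution satisfies $Y_t\in\mathcal{D}^{1,2}(\mathbb{R}^l)$, $Z$ admits a version with $Z_t\in\mathcal{D}^{1,2}(\mathbb{R}^{l\times d})$ and $\{D_tY_t\}_{t\in[0,T]}$ is a version of $Z$, and for each fixed $u$ and $j$ the pair $(D^j_uY_t,D^j_uZ_t)_{t\in[u,T]}$ solves the linear BSDE
\[
D^j_uY^i_t=D^j_u\xi^i+\int_t^T\Big(D^j_ug^i_s(Y_s,Z_s)+\sum_k\partial_{y^k}g^i_s\,D^j_uY^k_s+\sum_{k,m}\partial_{z^{km}}g^i_s\,D^j_uZ^{km}_s\Big)ds-\int_t^T\sum_m D^j_uZ^{im}_s\,dW^m_s,
\]
with $D^j_uY_t=0$ for $t<u$, where $\partial_yg_s,\partial_zg_s$ are evaluated at $(Y_s,Z_s)$. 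This is the direct extension to systems of the facts recalled in \citet{Che-Nam} and \citet{Nualart2006}; no comparison principle is involved.

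The heart of the argument is then an application of Itô's formula to $t\mapsto e^{\mu t}\sum_{i=1}^l|D^j_uY^i_t|^2$ with $\mu:=2B+\rho^2+1$, for fixed $u$ and $j$. Bounding $\langle D^j_uY_t,(\partial_yg_t)D^j_uY_t\rangle\le B|D^j_uY_t|^2$, estimating the $\partial_zg$-term by $2\rho|D^j_uY_t||D^j_uZ_t|\le\rho^2|D^j_uY_t|^2+|D^j_uZ_t|^2$ (which exactly cancels the $|D^j_uZ_t|^2$ produced by the quadratic variation), and estimating the source term by $2|D^j_uY_t|\big(\sum_iq_{ij}^2(t)\big)^{1/2}\le|D^j_uY_t|^2+\sum_iq_{ij}^2(t)$ — this last "$+1$" is what turns $2B+\rho^2$ into $\mu$ — one obtains
\[
d\Big(e^{\mu t}\textstyle\sum_i|D^j_uY^i_t|^2\Big)\ge-e^{\mu t}\textstyle\sum_iq_{ij}^2(t)\,dt+dM_t
\]
for a martingale $M$ (a true martingale by the $\mathcal{H}^4$-integrability of $DY$ and $DZ$, or after localization). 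Integrating over $[t,T]$, using $\sum_i|D^j_uY^i_T|^2=\sum_i|D^j_u\xi^i|^2\le\sum_iA_{ij}^2$ from \ref{b2}, and taking $E[\cdot\mid\mathcal{F}_t]$ gives
\[
e^{\mu t}\textstyle\sum_i|D^j_uY^i_t|^2\le e^{\mu T}\textstyle\sum_iA_{ij}^2+\int_t^Te^{\mu s}\textstyle\sum_iq_{ij}^2(s)\,ds,
\]
which, after dividing by $e^{\mu t}$, is precisely the right-hand side of \eqref{eq:boundZlemma}; crucially, it does not depend on $u$. Letting $u\uparrow t$ and using $Z^{ij}_t=D^j_tY^i_t$ yields the claimed estimate for $|Z^j_t|^2$.

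I expect the main obstacle to be the rigorous justification of the second step in the multidimensional setting: establishing Malliavin differentiability of $(Y,Z)$, the well-posedness of the linear BSDE displayed above, and — most delicately — the identification of $Z$ with the trace $(D_tY_t)_t$, so that the $u$-uniform bound may be evaluated on the diagonal. As in the one-dimensional case, this is handled by observing that $u\mapsto(D^j_uY,D^j_uZ)$ depends continuously on the data $(D^j_u\xi,D^j_ug(\cdot))$, which furnishes a jointly measurable version of $D^j_uY_t$ continuous in $u$ and legitimizes the passage $u\uparrow t$. The Itô estimate itself, though it is where the constant $2B+\rho^2+1$ originates, is then routine.
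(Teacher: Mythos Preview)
Your proposal is correct and follows essentially the same route as the paper: existence and $L^p$-integrability from classical Lipschitz BSDE theory, Malliavin differentiability of $(Y,Z)$ together with the linear BSDE for $(D^j_uY,D^j_uZ)$, an It\^o-formula energy estimate on $|D^j_uY_t|^2$ with the same Young-inequality splittings producing the constant $2B+\rho^2+1$, and finally the trace identification $Z_t=D_tY_t$. The only cosmetic difference is that you build the exponential weight $e^{\mu t}$ into the It\^o computation, whereas the paper applies It\^o to the unweighted $|U^{j,r}_t|^2$, obtains the integral inequality \eqref{eq:boundZU}, and then invokes Gronwall; the two are of course equivalent and yield the identical bound. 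For the step you flag as delicate --- Malliavin differentiability and the trace identity --- the paper simply cites \citet[Theorem~5.1 and Proposition~5.3]{karoui01}, so no extra argument beyond those references is needed.
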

\begin{proof}
	By \citet[Lemma 2.5]{Che-Nam}, the condition \ref{b2} implies $E\left[|\xi|^p\right] <+\infty$ for all $p\in [1, \infty)$.
	It follows from \citet[Theorem 5.1 and Proposition 5.3]{karoui01} that the BSDE \eqref{sup} has a unique solution $(Y,Z)\in\mathcal{S}^4(\mathbb{R}^{l})\times\mathcal{H}^4(\mathbb{R}^{{l}\times d})$, which is Malliavin differentiable.
%    $(Y,Z)\in\mathcal{L}^{1,2}_a(\mathbb{R}^{l})\times \mathcal{L}^{1,2}_a(\mathbb{R}^{l\times d})$.
    Moreover for every $i=1,\ldots,{l}$ and $j=1,\ldots,d$, the process $(D^j_rY^i_t,D^j_rZ^i_t)_{t\in [0,T]}$ has a version $(U^{ij,r}_t,V^{ij,r}_t)_{t\in [0,T]}$
    which satisfies
    \begin{equation*}
		U^{ij,r}_t=0,\quad V^{ij,r}_t=0,\quad\text{for}\quad 0\leq t<r\leq T,
	\end{equation*}
	and is the unique solution in $\mathcal{S}^2(\mathbb{R}^{l})\times\mathcal{H}^2(\mathbb{R}^{l\times d})$ of the BSDE
	\begin{align*}
		U^{j,r}_t&=D^j_r\xi+\int_t^T\partial_{y}g_s(Y_s,Z_s)U^{j,r}_s+\partial_{z}g_s(Y_s,Z_s)V^{j,r}_s+D^j_rg_s(Y_s,Z_s)ds -\int_t^{T}V^{j,r}_sdW_s.
	\end{align*}
	Applying It\^{o}'s formula to $|U^{j,r}_t|^2$ yields
	\begin{align*}
		|U^{j,r}_t|^2&=|D^j_r\xi|^2-\int_t^{T}2U^{j,r}_sV^{j,r}_sdW_s\\
		&\quad +\int_t^T2U^{j,r}_s\partial_{y}g_s(Y_s,Z_s)U^{j,r}_s+2U^{j,r}_s\partial_{z}g_s(Y_s,Z_s)V^{j,r}_s+2U^{j,r}_sD^j_rg_s(Y_s,Z_s)-|V^{j,r}_s|^2ds\\
		&\leq |D^j_r\xi|^2-\int_t^{T}2U^{j,r}_sV^{j,r}_sdW_s+\int_t^T 2B|U^{j,r}_s|^2+2\rho|U^{j,r}_s||V^{j,r}_s|+2\sqrt{\sum_{i=1}^{l}q^2_{ij}(s)}|U^{j,r}_s|-|V^{j,r}_s|^2ds\\
		&\leq |D^j_r\xi|^2-\int_t^{T}2U^{j,r}_sV^{j,r}_sdW_s+\int_t^T \left(2B+\rho^2+1\right)|U^{j,r}_s|^2+\sum_{i=1}^{l}q^2_{ij}(s)ds.
	\end{align*}
	Using condition \ref{b3} and taking conditional expectation in the above inequality yields
	\begin{equation}
	\label{eq:boundZU}
		|U^{j,r}_t|^2\leq E\Big[\sum_{i=1}^{l}A^2_{ij}+\int_t^T\left(2B+\rho^2+1\right)|U^{j,r}_s|^2+\sum_{i=1}^{l}q^2_{ij}(s)ds \,\Big| \,\mathcal{F}_t\Big].
	\end{equation}
	By \citet[Proposition 5.3]{karoui01} the process $Z$ is a version of the trace $(U^{t}_t)_{t\in [0,T]}$ of the Malliavin derivative of $Y$. Hence \eqref{eq:boundZlemma} follows from \eqref{eq:boundZU} by applying Gronwall's inequality.
	% \begin{equation*}
	% 	|U^{j,r}_t|^2\leq \left(\sum_{i=1}^{l}A^2_{ij}+\sum_{i=1}^{l}\int_t^{T}q^2_{ij}(s)e^{-\left(2B+\rho^2+1\right)(T-s)}ds\right)e^{\left(2B+\rho^2+1\right)(T-t)},\quad P\otimes dt\text{-a.e.}
	% \end{equation*}
\end{proof}
\begin{proof}[Theorem \ref{thm:sup}]
	Define the Lipschitz continuous function $\tilde{g}$ by
	\begin{align}
	\label{eq:g-tilde}
		\tilde{g}_t(y,z)=
		\begin{cases}
			g_t(y,z)\quad &\text{if}\quad |z|\leq Q,\\
			g_t(y,Qz/|z|)\quad &\text{if}\quad |z|>Q.
		\end{cases}
	\end{align}
	By \citet[Lemma 2.5]{Che-Nam} and \citet[Theorem 5.1]{karoui01} the BSDE corresponding to $(\tilde{g},\xi)$ has a unique solution $(Y,Z)\in\mathcal{S}^4(\mathbb{R}^{l})\times\mathcal{H}^4(\mathbb{R}^{{l}\times d})$.
For $x=(y,z)\in\mathbb{R}^{l+l\times d}$ let $\beta\in C^{\infty}(\mathbb{R}^{l+l\times d})$ be the mollifier
	\begin{align*}
		\beta(x):=
		\begin{cases}
			\lambda\exp\left(-\frac{1}{1-|x|^2}\right)\quad &\text{if}\quad |x|<1,\\
			0\quad &\text{otherwise},
		\end{cases}
	\end{align*}
	where the constant $\lambda\in\mathbb{R}_{+}$ is chosen such that $\int_{\mathbb{R}^{l+l\times d}}\beta(x)dx=1$.
	Set $\beta^{n}(x):=n^{l+l\times d}\beta(nx)$, $n\in\mathbb{N}\setminus\{0\}$, and define
	\begin{align*}
		g^n_t(\omega,x):=\int_{\mathbb{R}^{l+l\times d}}\tilde{g}_t(\omega,x')\beta^n(x-x')dx'
	\end{align*}
	so that for each $n>0$ the function $g^n$ satisfies \ref{b1prime} and \ref{b3prime} with the constant $\rho$ replaced by $\rho(Q)$.
	By Lemma \ref{le:sup} the BSDE corresponding to $(g^n,\xi)$
    has a unique solution $(Y^n,Z^n)$ in $\mathcal{S}^4(\mathbb{R}^{l})\times\mathcal{H}^4(\mathbb{R}^{l\times d})$ which satisfies
	\begin{align*}
		|Z^{n,j}_t|^2&\leq \left(\sum_{i=1}^{l}A^2_{ij}+\sum_{i=1}^{l}\int_t^{T}q^2_{ij}(s)e^{-\left(2B+\rho^2(Q)+1\right)(T-s)}ds\right)e^{\left(2B+\rho^2(Q)+1\right)(T-t)}\\
		&\leq \left(\sum_{i=1}^{l}A^2_{ij}+\sum_{i=1}^{l}\int_0^{T}q^2_{ij}(s)ds\right)e^{\left(2B+\rho^2(Q)+1\right)T}.
	\end{align*}
	%for all $j=1,\dots,d$.
	Since $T\leq\frac{\log (2)}{2B+\rho^2(Q)+1}$ we obtain
	\begin{equation*}
		|Z^{n,j}_t|^2\leq 2\left(\sum_{i=1}^{l}A^2_{ij}+\sum_{i=1}^{l}\int_0^{T}q^2_{ij}(s)ds\right)\quad \mbox{for all }j = 1, \dots, d.
	\end{equation*}
	This shows $|Z^n_t| \le Q$. Since $g^n$ converges uniformly in $(t,\omega, y,z)$ to $\tilde{g}$, using the procedure of the proof of \citet[Theorem 2.2]{Che-Nam}, it follows that $(Y^n, Z^n)$ converges to $(Y,Z)$ in ${\cal S}^2(\mathbb{R}^l)\times {\cal H}^2(\mathbb{R}^{l\times d})$, so that $|Z_t|\le Q$.
	Since $\tilde{g}(y,z)=g(y,z)$ for all $(y,z)\in \mathbb{R}^l\times\mathbb{R}^{l\times d}$ with $|z|\le 	Q$,
	it follows that $(Y,Z)$ is the unique solution of the BSDE corresponding to $(\xi, g)$ in $\mathcal{S}^{4}(\mathbb{R}^{l})\times\mathcal{S}^{\infty}(\mathbb{R}^{{l}\times d})$.
\end{proof}
\begin{corollary}
\label{thm:corol-Malliavin}
	Suppose \ref{b1}-\ref{b3} hold, $T\leq \frac{\log (2)}{2B+\rho^2(Q)+1}$ and $(Y,Z) \in \mathcal{S}^{4}(\mathbb{R}^{l})\times\mathcal{S}^{\infty}(\mathbb{R}^{{l}\times d})$ is the solution of the BSDE \eqref{sup}. Then $Y_t\in {\cal D}^{1,2}(\mathbb{R}^l)$ for all $t\in[0,T]$ and for every $j = 1, \dots, d$, one has
	\begin{equation}
	\label{eq:bound_corol}
		|D_r^j Y_t|^2\leq 2\Bigg(\sum_{i=1}^{l}A^2_{ij}+\sum_{i=1}^{l}\int_0^{T}q^2_{ij}(s)ds\Bigg)\quad\mbox{for all }r\in[0,t].
\end{equation}
\end{corollary}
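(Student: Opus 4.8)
The plan is to establish \eqref{eq:bound_corol} first along the smooth approximating sequence used in the proof of Theorem \ref{thm:sup} and then to pass to the limit using the closability of the Malliavin derivative operator. Recall from that proof the mollified generators $g^n$, which satisfy \ref{b1prime} and \ref{b3prime} with the \emph{same} constant $B$ and the same functions $A_{ij}, q_{ij}$ but with $\rho$ replaced by $\rho(Q)$, together with the corresponding solutions $(Y^n, Z^n) \in \mathcal{S}^4(\mathbb{R}^l)\times\mathcal{H}^4(\mathbb{R}^{l\times d})$, which were shown there to converge to $(Y,Z)$ in $\mathcal{S}^2(\mathbb{R}^l)\times\mathcal{H}^2(\mathbb{R}^{l\times d})$. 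Running the argument of Lemma \ref{le:sup} for each fixed $n$, the versions $(U^{j,r},V^{j,r})$ of $(D^j_r Y^n, D^j_r Z^n)$ solve the linear BSDE displayed there, and the estimate \eqref{eq:boundZU} holds for every $r\le t$, not just along the diagonal $r=t$. Resolving it by Gronwall's inequality exactly as in the derivation of \eqref{eq:boundZlemma}, with $a:=2B+\rho^2(Q)+1$, yields
\begin{equation*}
\abs{D^j_r Y^n_t}^2 \le \Big(\sum_{i=1}^l A^2_{ij} + \sum_{i=1}^l \int_t^T q^2_{ij}(s)\,e^{-a(T-s)}\,ds\Big)e^{a(T-t)} \le 2\Big(\sum_{i=1}^l A^2_{ij} + \sum_{i=1}^l \int_0^T q^2_{ij}(s)\,ds\Big)
\end{equation*}
for every $r\in[0,t]$ and $j=1,\dots,d$, where the last inequality uses $e^{aT}\le 2$, i.e.\ $T\le \frac{\log 2}{2B+\rho^2(Q)+1}$. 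This bound is uniform in $n$, and $D^j_r Y^n_t = 0$ for $r>t$ by non-anticipativity of $Y^n_t$.

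Next I would pass to the limit for each fixed $t$. Convergence in $\mathcal{S}^2(\mathbb{R}^l)$ gives $Y^n_t\to Y_t$ in $L^2(\Omega;\mathbb{R}^l)$, while the previous display gives, with $Q^2 = 2\sum_{j=1}^d(\sum_i A^2_{ij} + \sum_i \int_0^T q^2_{ij})$,
\begin{equation*}
\norm{Y^n_t}_{1,2}^2 = E\big[\abs{Y^n_t}^2\big] + E\Big[\int_0^T \abs{D_r Y^n_t}^2\,dr\Big] \le E\big[\abs{Y^n_t}^2\big] + TQ^2,
\end{equation*}
which is bounded uniformly in $n$ (the first term because $(Y^n)$ converges in $\mathcal{S}^2$). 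By closability of $D$ (\citet[Lemma 1.2.3]{Nualart2006}, applied componentwise) it follows that $Y_t\in\mathcal{D}^{1,2}(\mathbb{R}^l)$ and that $DY^n_t \to DY_t$ in the weak topology of $L^2(\Omega\times[0,T];\mathbb{R}^{l\times d})$; in particular $D^j Y^n_t \to D^j Y_t$ weakly in $L^2(\Omega\times[0,T];\mathbb{R}^l)$ for each $j$.

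Finally I would invoke weak closedness of convex sets: for each $j$, the set of $\phi\in L^2(\Omega\times[0,T];\mathbb{R}^l)$ with $\phi_r = 0$ for $r\in(t,T]$ and $\abs{\phi_r}^2\le 2(\sum_i A^2_{ij} + \sum_i \int_0^T q^2_{ij})$ for $P\otimes dr$-a.e.\ $r\in[0,t]$ is convex and strongly closed, hence weakly closed; since every $D^j Y^n_t$ lies in it, so does the weak limit $D^j Y_t$, which is precisely \eqref{eq:bound_corol} (and, choosing the progressively measurable version of $DY_t$, the bound holds for all $r\in[0,t]$). The estimates along the approximations are immediate from Lemma \ref{le:sup}, so the only genuinely delicate point is the limiting step: one must check that the uniform $\norm{\cdot}_{1,2}$-bound really transfers to $Y_t$ through closability and that the convex constraint on the derivatives survives the passage to the weak limit.
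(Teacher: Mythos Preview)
Your proposal is correct and follows essentially the same approach as the paper's proof: approximate via the mollified generators $g^n$ from the proof of Theorem \ref{thm:sup}, apply the Gronwall estimate of Lemma \ref{le:sup} off the diagonal to bound $|D^j_r Y^n_t|^2$ uniformly in $n$, and then invoke \citet[Lemma 1.2.3]{Nualart2006} to pass to the limit. Your explicit appeal to weak closedness of the convex constraint set is a welcome clarification of the paper's terse final line ``Thus, $D_rY_t$ satisfies \eqref{eq:bound_corol}.''
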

\begin{proof}
	Since $|Z|\leq Q$ is bounded, $(Y,Z)$ solves the BSDE with terminal condition $\xi$ and generator $\tilde{g}$ defined by \eqref{eq:g-tilde}.
	If $g$ satisfies \ref{b1prime} and \ref{b3prime}, then the result follows from Lemma \ref{le:sup}.
	Otherwise consider the sequence of smooth functions $g^n$ converging to $g$ as defined in the proof of Theorem \ref{thm:sup}.
	Let  $(Y^n,Z^n)\in\mathcal{S}^4(\mathbb{R}^{l})\times\mathcal{H}^4(\mathbb{R}^{l\times d})$ be the solutions to the BSDEs corresponding to $(g^n,\xi)$, which converge to $(Y,Z)$ in $\mathcal{S}^2(\mathbb{R}^{l})\times\mathcal{H}^2(\mathbb{R}^{l\times d})$.
	By Lemma \ref{le:sup} $(Y^n_t,Z^n_t) \in {\cal D}^{1,2}(\mathbb{R}^l)\times {\cal D}^{1,2}(\mathbb{R}^{l\times d})$ for each $t \in [0,T]$
	and the arguments in the proof of Theorem \ref{thm:sup} imply
	\begin{equation*}
		|D_r^j Y^{n}_t|^2\leq 2\left(\sum_{i=1}^{l}A^2_{ij}+\sum_{i=1}^{l}\int_0^{T}q^2_{ij}(s)ds\right)\quad j = 1, \dots, d, \,\, r,t \in [0,T].
	\end{equation*}
	Hence, $\sup_{n \in \mathbb{N}}E[\int_0^T|D_r^j Y^n_t|^2\,dr]< \infty$ for each $t \in [0,T]$.
	Since $(Y^n_t)$ converges to $Y_t$ in $L^2$, it follows from \citet[Lemma  1.2.3]{Nualart2006} that $Y_t \in {\cal D}^{1,2}(\mathbb{R}^l)$ and $(DY^n_t)$ converges to  $DY_t$ in the weak topology of ${\cal H}^2(\mathbb{R}^{l\times d})$.
	Thus, $D_rY_t$ satisfies \eqref{eq:bound_corol}.
%	{\color{red}Now for Malliavin differentiability of Z ....!? for $DY = Z$... ??}
\end{proof}
As a concequence to Theorem \ref{thm:sup}, we give a condition for global solvability of fully coupled systems of BSDEs.
For the remainder of this section we put
\begin{equation*}
	\Delta_n := \frac{\log (2)}{2B+\rho^2(2^nQ)+1},\quad n \in \mathbb{N}.
\end{equation*}

\begin{proposition}
\label{thm:prob-global}
	Assume that  \ref{b1}-\ref{b2} hold, that there exists $N\in\mathbb{N}$ such that $\sum_{n=0}^{N}\Delta_n\geq T$, and
	\ref{b3} holds %for all $(y,z)\in\mathbb{R}^{l}\times\mathbb{R}^{{l}\times d}$ satisfying $|z|\leq 2^{N}Q$.
	with $Q$ replaced by $2^NQ$.
	Then the BSDE \eqref{sup} has a unique solution in $\mathcal{S}^{4}(\mathbb{R}^{l})\times\mathcal{S}^{\infty}(\mathbb{R}^{{l}\times d})$ and $|Z_t|\leq 2^{N}Q$.
	% \begin{equation*}
	% 	|Z_t|\leq 2^{N}Q.
	% \end{equation*}
\end{proposition}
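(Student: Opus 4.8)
The plan is to build the global solution by concatenating local solutions on a finite partition of $[0,T]$ run backwards from $T$, controlling along the way the a priori $L^\infty$‑bound on the control process (hence the argument of $\rho$) through the Malliavin estimate of Corollary~\ref{thm:corol-Malliavin}. Concretely, I would set $s_0:=T$ and, recursively, $s_{n+1}:=\max(s_n-\Delta_n,0)$ for $n=0,\dots,N$, and put $I_n:=[s_{n+1},s_n]$; since $\sum_{n=0}^N\Delta_n\ge T$ we get $s_{N+1}=0$, so $I_0,\dots,I_N$ cover $[0,T]$ and $|I_n|\le\Delta_n$ for every $n$.

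Then I would proceed by backward induction over $n=0,\dots,N$. For the restriction of \eqref{sup} to $I_n$ with terminal datum $\xi^{(n)}$, let $\beta^{(n)}_j$ be a bound for $\esssup_{r,\omega}|D^j_r\xi^{(n)}|^2$ and set $Q^{(n)}:=\bigl(2\sum_j(\beta^{(n)}_j+\sum_i\int_{I_n}q^2_{ij})\bigr)^{1/2}$, the constant playing the role of $Q$ in~\ref{b3} for this sub‑problem. Starting from $\xi^{(0)}:=\xi$ and $\beta^{(0)}_j:=\sum_iA^2_{ij}$, so that $Q^{(0)}\le Q$, suppose the pieces on $I_0,\dots,I_{n-1}$ have been built with $Q^{(k)}\le 2^kQ$ for $k<n$ and with the piece on $I_{n-1}$ having left‑endpoint value $Y_{s_n}$; set $\xi^{(n)}:=Y_{s_n}$. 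The hypotheses of Theorem~\ref{thm:sup} hold on $I_n$: \ref{b1} is unchanged; Corollary~\ref{thm:corol-Malliavin} applied on $I_{n-1}$ gives $\xi^{(n)}\in\mathcal D^{1,2}$ together with the recursion $\beta^{(n)}_j\le 2(\beta^{(n-1)}_j+\sum_i\int_{I_{n-1}}q^2_{ij})$, so~\ref{b2} holds for $\xi^{(n)}$; \ref{b3} holds on $\{|z|\le Q^{(n)}\}$ since $Q^{(n)}\le 2^nQ\le 2^NQ$ and~\ref{b3} is assumed on $\{|z|\le 2^NQ\}$; and $|I_n|\le\Delta_n=\frac{\log 2}{2B+\rho^2(2^nQ)+1}\le\frac{\log 2}{2B+\rho^2(Q^{(n)})+1}$ because $\rho$ is nondecreasing. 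Hence Theorem~\ref{thm:sup} yields a unique solution of the sub‑problem on $I_n$ in $\mathcal S^4\times\mathcal S^\infty$ with $|Z_t|\le Q^{(n)}$. Unrolling the recursion gives $\beta^{(n)}_j\le 2^n\beta^{(0)}_j+\sum_{k=0}^{n-1}2^{n-k}\sum_i\int_{I_k}q^2_{ij}$, and using $2^{n-k}\le 2^n$ together with the fact that the $I_k$ overlap only in their endpoints one obtains $(Q^{(n)})^2\le 2^n\cdot 2\sum_j(\sum_iA^2_{ij}+\sum_i\int_0^Tq^2_{ij})=2^nQ^2$, i.e. $Q^{(n)}\le 2^{n/2}Q\le 2^nQ$, which closes the induction; in particular $|Z_t|\le 2^NQ$ on each $I_n$.

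Concatenating the $N+1$ pieces produces a process $(Y,Z)$ on $[0,T]$: it is continuous, since the left‑endpoint value of the piece on $I_{n-1}$ coincides with the terminal value of the piece on $I_n$; it belongs to $\mathcal S^4(\mathbb R^l)\times\mathcal S^\infty(\mathbb R^{l\times d})$ with $|Z_t|\le 2^NQ$; and it solves \eqref{sup} on $[0,T]$. For uniqueness, given two solutions $(Y,Z)$ and $(Y',Z')$ in $\mathcal S^4\times\mathcal S^\infty$, put $R:=\norm{Z}_{\mathcal S^\infty}\vee\norm{Z'}_{\mathcal S^\infty}$; by~\ref{b1} the truncated generator $g^R_t(y,z):=g_t(y,(|z|\wedge R)z/|z|)$ (with $g^R_t(y,0):=g_t(y,0)$) is globally Lipschitz in $(y,z)$, and both $(Y,Z)$ and $(Y',Z')$ solve the BSDE with generator $g^R$, terminal condition $\xi\in L^2$ and $g^R_\cdot(0,0)\in\mathcal H^2$; the classical well‑posedness of Lipschitz BSDEs (\cite[Theorem 5.1]{karoui01}) then forces $(Y,Z)=(Y',Z')$.

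The hard part is the induction controlling $Q^{(n)}$: the admissible horizons $\Delta_n$ shrink as $\rho$ gets evaluated at larger and larger arguments, and the assumption $\sum_{n=0}^N\Delta_n\ge T$ is precisely what guarantees that the $N+1$ intervals cover $[0,T]$ before the $|Z|$‑bound $2^nQ$ exceeds $2^NQ$ — which is also why~\ref{b3} has to be imposed on the enlarged ball $\{|z|\le 2^NQ\}$. A detail demanding care is that, in the proofs of Lemma~\ref{le:sup}, Theorem~\ref{thm:sup} and Corollary~\ref{thm:corol-Malliavin}, the terminal condition enters the bounds on $Z$ and on $DY$ only through $\esssup_{r,\omega}|D^j_r\xi|^2$ — for which $\sum_iA^2_{ij}$ is merely one admissible bound — so that re‑using $Y_{s_n}$ as a terminal datum does not introduce a factor depending on the dimension $l$ into the recursion for $\beta^{(n)}_j$.
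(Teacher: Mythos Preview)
Your proof is correct and follows essentially the same backward--pasting strategy as the paper: solve on successive subintervals $[s_{n+1},s_n]$ via Theorem~\ref{thm:sup}, feed the left endpoint value forward as the next terminal datum, and control its Malliavin derivative by Corollary~\ref{thm:corol-Malliavin} so that the running $Q$--constant at most doubles at each step. Your bookkeeping is in fact a bit sharper than the paper's (you track $\int_{I_k}q_{ij}^2$ rather than bounding each by $\int_0^Tq_{ij}^2$, which yields $Q^{(n)}\le 2^{n/2}Q$ rather than just $2^nQ$), and your global uniqueness argument via truncation of $g$ at $R=\norm{Z}_{\mathcal S^\infty}\vee\norm{Z'}_{\mathcal S^\infty}$ is more explicit than the paper's terse ``uniqueness follows from Theorem~\ref{thm:sup}''. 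One cosmetic point: you invoke $Q^{(n)}\le 2^nQ$ when checking~\ref{b3} on $I_n$ but only derive it afterwards---since $Q^{(n)}$ depends only on $\beta^{(n)}_j$, which is fixed by Corollary~\ref{thm:corol-Malliavin} on the already--solved interval $I_{n-1}$, you should simply state and verify this bound \emph{before} applying Theorem~\ref{thm:sup} on $I_n$.
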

\begin{proof}
	If $T\le \Delta_0$ then the result follows from Theorem \ref{thm:sup}.
	Otherwise, if $T > \Delta_0$ it follows by the same arguments as in the proof of
	Theorem \ref{thm:sup} that the BSDE \eqref{sup} has a unique solution $(Y^0,Z^0)$ in $\mathcal{S}^{4}(\mathbb{R}^{l})\times\mathcal{S}^{\infty}(\mathbb{R}^{{l}\times d})$ on the interval $[T-\Delta_0,T]$.
	Moreover, $Z^0$ satisfies $|Z_t^0|\leq Q$ and by Corollary \ref{thm:corol-Malliavin} one has
    $Y^0_{T-\Delta_0} \in {\cal D}^{1,2}(\mathbb{R}^l)$ and for every $r \le T-\Delta_0$,
	\begin{equation*}
		|D^j_rY^0_{T-\Delta_0 }|^2\leq \sum_{i=1}^{l}2|A_{ij}|^2+\sum_{i=1}^{l}\int_0^{T}2|q_{ij}(t)|^2dt\quad \mbox{for all }j = 1, \dots, d.
	\end{equation*}
	Since $g$ satisfies \ref{b3} for all $(y,z)\in \mathbb{R}^l\times \mathbb{R}^{l\times d}$ such that $|z|\le c Q$,
	again by Theorem \ref{thm:sup} the BSDE \eqref{sup} with terminal condition $Y^0_{T-\Delta_0}$ has a unique solution $(Y^1, Z^1)$ in $\mathcal{S}^{4}(\mathbb{R}^{l})\times\mathcal{S}^{\infty}(\mathbb{R}^{{l}\times d})$ on $[(T-\Delta_0 - \Delta_1)\vee 0,T-\Delta_0]$ , and
	\begin{align*}
		&|D^j_rY^1_{(T-\Delta_0 - \Delta_1)\vee 0 }|^2\leq \sum_{i=1}^{l}2^2|A_{ij}|^2+\sum_{i=1}^{l}\int_0^{T}(2^2+2)|q_{ij}(t)|^2dt, \quad \mbox{for all }j = 1, \dots, d\\
		&|Z_t^1|\leq 2 Q, \quad t\in \left[(T-\Delta_0 - \Delta_1)\vee 0,T-\Delta_0 \right].
	\end{align*}
	Repeating the previous arguments, for $N\ge 2$ the BSDE \eqref{sup} has a unique solution $(Y^N, Z^N)$ in $\mathcal{S}^{4}(\mathbb{R}^{l})\times\mathcal{S}^{\infty}(\mathbb{R}^{{l}\times d})$ on $[(T-\sum_{n=0}^{N}\Delta_n)\vee 0 ,(T-\sum_{n=0}^{N-1}\Delta_n )\vee 0]$ with terminal condition $Y_{(T-\sum_{n=0}^{N-1}\Delta_n)\vee 0}$. Moreover,
	\begin{align*}
	&|D^j_rY^N_{(T-\sum_{n=0}^{N}\Delta_n)\vee 0}|^2\leq \sum_{i=1}^{l}2^{N}|A_{ij}|^2+\sum_{i=1}^{l}\int_0^{T}(\sum_{k=1}^{N}2^{k})|q_{ij}(t)|^2dt  \quad \mbox{for all }j = 1, \dots, d\\
	&|Z^N_t|\leq 2^{N}Q, \quad t\in \Big[(T-\sum_{n=0}^{N}\Delta_n)\vee 0,(T-\sum_{n=0}^{N-1}\Delta_n)\vee 0 \Big].
	\end{align*}
	Hence, the pair $(Y, Z)$ given by
	\begin{align*}
		Y &:= Y^01_{\left[T-\Delta_1, T\right]} + \sum_{n=1}^NY^n1_{\left[\left(T-\sum_{i=0}^n\Delta_i \right)\vee 0, (T -\sum_{i=0}^{n-1}\Delta_i)\vee 0 \right]}\\
		Z &:= Z^01_{\left[T-\Delta_1, T\right]} + \sum_{n=1}^NZ^n1_{\left[\left(T-\sum_{i=0}^n\Delta_i \right)\vee 0, (T -\sum_{i=0}^{n-1}\Delta_i)\vee 0 \right]}
	\end{align*}
	solves \eqref{sup} and its uniqueness follows from Theorem \ref{thm:sup}.
\end{proof}
\begin{remark}
	The condition $\sum_{n= 0}^N\Delta_n\ge T$ for some $N \in \mathbb{N}$ does not guarantee global solvability of multidimentional BSDEs with superquadratic growth.
	In fact, if $\rho(x) \ge C(1+\sqrt{x})$ for all $x\ge0 $, then $\sum_{n\ge 0}\Delta_n < \infty$.
	However, it does guarantee global solvability for BSDEs whose generator grows slightly faster than the linear function.
	For instance, if $\rho(x) \le C(1 + \sqrt{\log(1+x)})$ one has
%If $\rho(x)=\sqrt{\log(1+x)}$ for $x\geq 0$, then $\sum_{n=0}^{\infty}\frac{\log 2}{2B+\rho^2(2^{n}Q)+1}=\infty$. Indeed, since $\rho(2^nQ)\leq\sqrt{\log(2^n(1+Q))}$, we have
\begin{align*}
\sum_{n=0}^{\infty}\frac{\log(2)}{2B+\rho^2(2^{N}Q)+1}&\geq\sum_{n=0}^{\infty}\frac{\log(2)}{2B+2C^2(1+\log(2^{N}(1+Q)))+1}\\
&=\sum_{n=0}^{\infty}\frac{\log(2)}{2B+2C^2(1+\log(1+Q)+n\log(2))+1}=\infty.
\end{align*}
\end{remark}

\section{Coupled FBSDE with superquadratic growth}\label{sec:FBSDE}
\subsection{Proof of Theorem \ref{thm:markov1}}

%\begin{proof}[Theorem \ref{thm:markov1}]

\emph{Step 1}: We first assume that $h,b$ and $g$ are continuously differentiable in all variables.
	Let us define 	
\begin{align*}
	C^1_{k,\lambda,l,d} :=\frac{k^2_5}{k^2_3}\wedge\frac{\log 2}{k_1}\wedge\frac{\lambda_2}{k_2M}\wedge\frac{\log 2}{2k_4+\rho^2(M)+1}
	\end{align*}
	with $M:=4k_5\lambda_2\sqrt{dl}$.		
	We will show that for $T\le C^1_{k,\lambda,l,d}$, the sequence $(X^n, Y^n, Z^n)$ given by $X^0 = 0$, $Y^0 = 0$, $Z^0 =0$ and
	\begin{equation*}
		\begin{cases}
			X^{n+1}_t &= x + \int_0^tb(X^{n+1}_u, Y^n_u)\,du + \int_0^t\sigma_u\,dW_u\\
			Y^{n+1}_t &= h(X^{n+1}_T) + \int_t^Tg_u(X^{n+1}_u, Y^{n+1}_u, Z^{n+1}_u)\,du - \int_t^TZ^{n+1}_u\,dW_u, \quad n\ge 1
		\end{cases}
	\end{equation*}
	is well defined and that $\abs{Z^{n}_t} \le M$ for all $n\in \mathbb{N}$ and $t \in [0,T]$.
	The process $X^1$ is well defined, $X^1_t$ belongs to ${\cal D}^{1,2}(\mathbb{R}^m)$ for every $t$ and the process $(D_rX_t)_{t\in [0,T]}$ satisfies the linear equation
	\begin{align*}
        &D_rX_t^{1}=0,~0\le t<r\le T,\\
		&D_rX_t^{1} = \int_r^t(\partial_x bD_rX^{1}_u + \partial_ybD_rY_u^{0})\,du + D_r\left( \int_r^t\sigma_u\,dW_u \right),~0\le r\le t\le T,
	\end{align*}
	 with $D_r( \int_r^t\sigma_u\,dW_u ) =\sigma1_{[r,t]} $, see \citet[Lemma 2.2.1 and Theorem 2.2.1]{Nualart2006}.
	 Hence, since $b$ is Lipschitz continuous, we have
	 \begin{align*}
		\abs{D_rX^{1}_t} &\le \int_r^tk_1D_rX^{1}_u \,du + \sigma_r \quad \text{and} \quad
		  \abs{D_rX^{1}_t} \le\lambda_2e^{Tk_1},
	\end{align*}
	where the second estimate comes from Gronwall's inequality.
	We will now show that since $T\le C^1_{k,\lambda,l,d}$, $h(X^1_T)$ and $g(X^1,\cdot,\cdot)$ satisfy \ref{b1}-\ref{b3}.
	In fact, since $h$ is continuously differentiable and $X^1_T \in {\cal D}^{1,2}(\mathbb{R}^m)$, it follows from the chain rule, see for instance \citet[Proposition 1.2.4]{Nualart2006}, that $h(X^1_T) \in {\cal D}^{1,2}(\mathbb{R}^{l})$ and $|D^j_r(h(X_T^1))| = |\partial_xh(X^1_T)D^j_rX^1_T| \le \lambda_2k_5e^{Tk_1}$ for all $r \in [0,T]$, $j= 1, \dots, d$.
	Using $T\le \frac{\log 2}{k_1}$, we deduce that $h(X_T^1)$ satisfies \ref{b2} with $A_{ij}:= 2\lambda_2k_5$.
	Similarly, by \ref{a4} and using that the function $x\mapsto g(x,y,z)$ is continuously differentiable, it follows that $g_.(X^1_.,y,z) \in {\cal L}^{1,2}_a(\mathbb{R}^l)$ and $\abs{D^j(g^i_.(X^1,y,z))} \le \lambda_2k_3e^{Tk_1}$, $j= 1,\dots,d$ for all $(y,z)\in \mathbb{R}^l\times \mathbb{R}^{l\times d}$ such that $|z| \le M$ and, due to \ref{a5}, applying the same argument to $\hat{g}_t(x,y,y',z,z'):= g_t(x,y,z)-g_t(x,y',z')$ yields
	\begin{equation*}
	|D_r^jg_t(X^1_t,y,z) - D^j_rg_t(X^1_t,y',z')| \le K\lambda_2e^{Tk_1}.
	\end{equation*}
	Using $T\le \frac{k^2_5}{k^2_3}\wedge\frac{\log 2}{k_1}$, we deduce that $g_.(X^1_.,y,z)$ satisfies \ref{b3} with $q_{ij}=2\lambda_2k_3$ and $K_u(t):= 2K\lambda_2$.
	Moreover due to \ref{a4}, the function $(t,y,z)\mapsto g_t(X^1_t,y,z)$ satisfies \ref{b1}.

	Therefore, by $T\le \frac{\log 2}{2k_4+\rho^2(M)+1}$, Theorem \ref{thm:sup} ensures that $(Y^1, Z^1)$ exists.
	Consider the function $\tilde{g}$ defined by
	\begin{equation*}
		\tilde{g}_t(x,y,z) = \begin{cases}
			g_t(x,y,z) & \text{ if } |z|\le M\\
			g_t(x,y,zM/|z| ) & \text{ if } |z| >M.
		\end{cases}
	\end{equation*}	
	Since $(Y^1, Z^1)$ also solves the BSDE with terminal condition $h(X^1_T)$ and a Lipschitz generator $\tilde{g}(X^1, \cdot,\cdot)$, it follows from Lemma \ref{le:sup} and its proof that  $(Y^1_t, Z^1_t)\in{\cal D}^{1,2}(\mathbb{R}^{l})\times{\cal D}^{1,2}(\mathbb{R}^{l\times d})$ for all $t \in [0,T]$ and  $D_tY^1$ is bounded and it holds $Z^1_t = D_tY^1_t$.
	In addition, we have $\abs{D_rX^1_t}\le 4\lambda_2$ and $\abs{D_rY^1_t}\le M$.

	Now let $n \in \mathbb{N}$, assume that $(X^n_t, Y^n_t, Z^n_t)\in{\cal D}^{1,2}(\mathbb{R}^{m})\times{\cal D}^{1,2}(\mathbb{R}^{l})\times{\cal D}^{1,2}(\mathbb{R}^{l\times d})$, $Z^n_t = D_tY^n_t$ and $\abs{D_rX^n_t}\le 4\lambda_2$, $\abs{D_rY_t^n} \le M$ for all $r,t \in [0,T]$.
	The process $X^{n+1}$ is well defined, for each $t$; $X^{n+1}_t$ belongs to ${\cal D}^{1,2}(\mathbb{R}^m)$ and it holds
	\begin{align*}
        &D_rX_t^{n+1}=0,~0\le t<r\le T,\\
		&D_rX_t^{n+1} = \sigma_r+\int_r^t(\partial_x bD_rX^{n+1}_u + \partial_ybD_rY_u^{n})\,du,~0\le r\le t\le T.
	\end{align*}
	 Since $\partial_xb$, $\partial_yb$ and $\sigma$ are bounded by $k_1$, $k_2$ and $\lambda_2$ respectively, it follows from Gronwall's inequality that
	\begin{equation*}
		\abs{D_rX^{n+1}_t} \le e^{Tk_1}\left( \lambda_2 + k_2\int_0^T\abs{D_rY^n_u}\,du \right).
	\end{equation*}
	Hence,
	\begin{equation}
	\label{eq:bound_DX_Markov}
		\abs{D_rX^{n+1}_t} \le e^{Tk_1}\left( \lambda_2 + k_2TM \right) < \infty
	\end{equation}
	so that since $T \le \frac{\lambda_2}{k_2M}$, we have $\abs{D_rX^{n+1}_t} \le 4\lambda_2$.
	As above, $h(X^{n+1}_T)$ and $g_.(X^{n+1}, y,z)$ are Malliavin differentiable and satisfy \ref{b1}-\ref{b3} with $A_{ij}:= 2\lambda_2k_5$, $q_{ij}=2\lambda_2k_3$ and $K_u(t):= 2K\lambda_2$.
	% \begin{equation*}
	% 	|D(h(X_T^{n+1}))| \le 2k_5\lambda_2\quad \text{and } |D(g_.(X^{n+1}, y,z)) | \le 2k_3\lambda_2.
	% \end{equation*}
	It then follows again from Theorem \ref{thm:sup} that $(Y^{n+1}, Z^{n+1})$ exists and $|Z^{n+1}|\le M$ is bounded.
	Since $(Y^{n+1}, Z^{n+1})$ also solves the BSDE with terminal condition $h(X^{n+1}_T)$ and a Lipschitz generator $\tilde{g}(X^{n+1}, \cdot,\cdot)$, Lemma \ref{le:sup} and its proof guarantee that  $(Y^{n+1}_t, Z^{n+1}_t)\in{\cal D}^{1,2}(\mathbb{R}^{l})\times{\cal D}^{1,2}(\mathbb{R}^{l\times d})$ for all $t \in [0,T]$ and  $D_tY^{n+1}$ is bounded and it holds $Z^{n+1}_t = D_tY^1_t$, with $\abs{D_rY^{n+1}_t}\le M$.
	
	\emph{Step 2}: Now we show that there is a positive constant $2^2_{k,\lambda,l,d}$ such that if $T\le 2^2_{k,\lambda,l,d}$, then $(X^n,Y^n,Z^n)$ is a Cauchy sequence in $\mathcal{S}^2(\mathbb{R}^m)\times\mathcal{S}^2(\mathbb{R}^{l})\times\mathcal{H}^2(\mathbb{R}^{l\times d})$.
	Using \ref{a1} we can estimate the norm of the difference $X^{n+1}_t - X^n_t$ as
	\begin{align*}
		&|X^{n+1}_t-X^{n}_t|^2\leq 2\left(\int_0^tk_1|X^{n+1}_s-X^{n}_s|ds\right)^2+2\left(\int_0^tk_2|Y^{n}_s-Y^{n-1}_s|ds\right)^2.
		%& +6\left(\int_0^tk_3|Z^{n}_s-Z^{n-1}_s|ds\right)^2
		%&\quad +3\lambda^2_2(1+\lambda_4+A)^2\left(\int_0^t\left[k_{7}|X^{n}_s-X^{n-1}_s|+\phi(\lambda_4)|Y^{n}_s-Y^{n-1}_s|+\rho(A)|Z^{n}_s-Z^{n-1}_s|\right]ds\right)^2\\
		%+2\left(\int_0^t\left[\sigma(s,X^{n+1}_s,Y^{n}_s,Z^{n}_s)-\sigma(s,X^{n}_s,Y^{n-1}_s,Z^{n-1}_s)\right]dW_s\right)^2.
	\end{align*}
	Thus
	\begin{align*}
		&\sup_{0\leq t\leq T}|X^{n+1}_t-X^{n}_t|^2\leq 2\left(\int_0^Tk_1|X^{n+1}_s-X^{n}_s|ds\right)^2+2\left(\int_0^Tk_2|Y^{n}_s-Y^{n-1}_s|ds\right)^2.
		%&\quad +6\left(\int_0^Tk_3|Z^{n}_s-Z^{n-1}_s|ds\right)^2\\
		%%&\quad +3\lambda^2_2(1+\lambda_4+A)^2\left(\int_0^T\left[k_{7}|X^{n}_s-X^{n-1}_s|+\phi(\lambda_4)|Y^{n}_s-Y^{n-1}_s|+\rho(A)|Z^{n}_s-Z^{n-1}_s|\right]ds\right)^2\\
		%&\quad +2\sup_{0\leq t\leq T}\left(\int_0^t\left[\sigma(s,X^{n+1}_s,Y^{n}_s,Z^{n}_s)-\sigma(s,X^{n}_s,Y^{n-1}_s,Z^{n-1}_s)\right]dW_s\right)^2.
	\end{align*}
	Taking expectation on both sides and using Cauchy-Schwarz' inequality, we have
	\begin{align*}
		&E\left[\sup_{0\leq t\leq T}|X^{n+1}_t-X^{n}_t|^2\right]\leq 2Tk^2_1E\left[\int_0^T|X^{n+1}_s-X^{n}_s|^2ds\right]+2Tk^2_2E\left[\int_0^T|Y^{n}_s-Y^{n-1}_s|^2ds\right]\\
		%&\quad +6Tk^2_3E\left[\int_0^T|Z^{n}_s-Z^{n-1}_s|^2ds\right]\\
		%&\quad +9\lambda^2_2(1+\lambda_4+A)^2TE\left[\int_0^T\left[k^2_{7}|X^{n}_s-X^{n-1}_s|^2+\phi^2(\lambda_4)|Y^{n}_s-Y^{n-1}_s|^2+\rho^2(A)|Z^{n}_s-Z^{n-1}_s|^2\right]ds\right]\\
		%&\quad +24E\left[\int_0^T\left[k^2_{4}|X^{n+1}_s-X^{n}_s|^2+k^2_{5}|Y^{n}_s-Y^{n-1}_s|^2+k^2_{6}|Z^{n}_s-Z^{n-1}_s|^2\right]ds\right]\\
		&\leq2T^2k^2_1E\left[\sup_{0\leq t\leq T}|X^{n+1}_t-X^{n}_t|^2\right]+2T^2k^2_2E\left[\sup_{0\leq t\leq T}|Y^{n}_t-Y^{n-1}_t|^2\right].
		%&\quad +\left(6Tk^2_3+24k^2_6\right)E\left[\int_0^T|Z^{n}_t-Z^{n-1}_t|^2dt\right].%\\
		%&\quad +9T^2\lambda^2_2(1+\lambda_4+A)^2k^2_{7}E\left[\sup_{0\leq t\leq T}|X^{n}_t-X^{n-1}_t|^2\right]\\
		%&\quad +9T^2\lambda^2_2(1+\lambda_4+A)^2\phi^2(\lambda_4)E\left[\sup_{0\leq t\leq T}|Y^{n}_t-Y^{n-1}_t|^2\right]+9T\lambda^2_2(1+\lambda_4+A)^2\rho^2(A)E\left[\int_0^T|Z^{n}_t-Z^{n-1}_t|^2dt\right]
	\end{align*}
	Choosing $T$ to be small enough so that $2T^2k^2_1\leq\frac{1}{2}$, it follows
	\begin{equation}
	\label{eq:estimate_X}
		E\left[\sup_{0\leq t\leq T}|X^{n+1}_t-X^{n}_t|^2\right] \leq4T^2k^2_2E\left[\sup_{0\leq t\leq T}|Y^{n}_t-Y^{n-1}_t|^2\right].%\\
		% +\left(12Tk^2_3+48k^2_6\right)E\left[\int_0^T|Z^{n}_t-Z^{n-1}_t|^2dt\right].
	\end{equation}
	% \begin{align*}
	% Y^{n+1}_t-Y^{n}_t&=h(X^{n+1}_T)-h(X^n_T)-\int_t^T\left[Z^{n+1}_s-Z^{n}_s\right]dW_s\\
	% &\quad+\int_t^T\left[g_s(X^{n+1}_s,Y^{n+1}_s,Z^{n+1}_s)-g_s(X^{n}_s,Y^{n}_s,Z^{n}_s)\right]ds.
	% \end{align*}
	On the other hand, applying It\^{o}'s formula to $e^{\beta t}|Y^{n+1}_t-Y^{n}_t|^2$, $\beta \ge 0$, we have
	\begin{align*}
		e^{\beta t}|Y^{n+1}_t-Y^{n}_t|^2&=e^{\beta T}|h(X^{n+1}_T)-h(X^n_T)|^2-2\int_t^Te^{\beta s}(Y^{n+1}_s-Y^{n}_s)(Z^{n+1}_s-Z^{n}_s)dW_s\\
		&\quad -\int_t^Te^{\beta s}(Z^{n+1}_s-Z^{n}_s)^2ds-\int_t^T\beta e^{\beta s}(Y^{n+1}_s-Y^{n}_s)^2ds\\
		&\quad+2\int_t^Te^{\beta s}(Y^{n+1}_s-Y^{n}_s)\left[g_s(X^{n+1}_s,Y^{n+1}_s,Z^{n+1}_s)-g_s(X^{n}_s,Y^{n}_s,Z^{n}_s)\right]ds.%\\
		%&=-\int_t^Te^{\beta s}(Y^{n+1}_s-Y^{n}_s)(Z^{n+1}_s-Z^{n}_s)dW_s\\
		%&\quad-\int_t^Te^{\beta s}(Z^{n+1}_s-Z^{n}_s)^2ds-\int_t^T\beta e^{\beta s}(Y^{n+1}_s-Y^{n}_s)^2ds\\
		%&\quad+2\int_t^Te^{\beta s}(Y^{n+1}_s-Y^{n}_s)\left[Z^{n+1}_sg(s,X^{n}_s,Y^{n}_s,Z^{n}_s)-Z^{n}_sg(s,X^{n}_s,Y^{n}_s,Z^{n}_s)\right]ds\\
		%&\quad+2\int_t^Te^{\beta s}(Y^{n+1}_s-Y^{n}_s)\left[Z^{n-1}_sg(s,X^{n-1}_s,Y^{n-1}_s,Z^{n-1}_s)-Z^{n}_sg(s,X^{n-1}_s,Y^{n-1}_s,Z^{n-1}_s)\right]ds\\
		%&\quad+2\int_t^Te^{\beta s}(Y^{n+1}_s-Y^{n}_s)\left[Z^{n}_sg(s,X^{n}_s,Y^{n}_s,Z^{n}_s)-Z^{n-1}_sg(s,X^{n-1}_s,Y^{n-1}_s,Z^{n-1}_s)\right]ds\\
	\end{align*}
	Hence, due to the condition \ref{a3} and the boundedness of $(Z^n)$, it holds
	\begin{align*}
		&e^{\beta t}|Y^{n+1}_t-Y^{n}_t|^2+\int_t^Te^{\beta s}(Z^{n+1}_s-Z^{n}_s)^2ds\\
		&\leq e^{\beta T}\abs{h(X^{n+1}_T)-h(X^n_T)}^2-2\int_t^Te^{\beta s}(Y^{n+1}_s-Y^{n}_s)(Z^{n+1}_s-Z^{n}_s)dW_s\\
		&\quad -\int_t^T\beta e^{\beta s}(Y^{n+1}_s-Y^{n}_s)^2ds+2\int_t^Te^{\beta s}\rho(M)\abs{Y^{n+1}_s-Y^{n}_s}\abs{Z^{n+1}_s-Z^{n}_s}ds\\
		&\quad %+2\int_t^Te^{\beta s}\lambda_3(1+\phi(\lambda_4)\lambda_4+\rho(A)A)|Y^{n+1}_s-Y^{n}_s||Z^{n}_s-Z^{n-1}_s|ds
		+2\int_t^Te^{\beta s}k_{7}\abs{Y^{n+1}_s-Y^{n}_s}\abs{X^{n+1}_s-X^{n}_s}ds+2\int_t^Te^{\beta s}k_4\abs{Y^{n+1}_s-Y^{n}_s}^2ds.%\\
		%&\quad +2\int_t^Te^{\beta s}(\rho(A)A+\lambda_3(1+\phi(\lambda_4)\lambda_4+\rho(A)A))|Y^{n+1}_s-Y^{n}_s||Z^{n}_s-Z^{n-1}_s|ds\\
		%&\leq -\int_t^Te^{\beta s}(Y^{n+1}_s-Y^{n}_s)(Z^{n+1}_s-Z^{n}_s)dW_s\\
		%&\quad -\int_t^T\beta e^{\beta s}(Y^{n+1}_s-Y^{n}_s)^2ds+2\int_t^Te^{\beta s}\lambda_3(1+\phi(\lambda_4)\lambda_4+\rho(A)A)|Y^{n+1}_s-Y^{n}_s|Z^{n+1}_s-Z^{n}_s|ds\\
		%&\quad +2\int_t^Te^{\beta s}(\rho(A)A+2\lambda_3(1+\phi(\lambda_4)\lambda_4+\rho(A)A))|Y^{n+1}_s-Y^{n}_s||Z^{n}_s-Z^{n-1}_s|ds\\
		%&\quad +2\int_t^Te^{\beta s}k_{7}A|Y^{n+1}_s-Y^{n}_s||X^{n}_s-X^{n-1}_s|ds+2\int_t^Te^{\beta s}\phi(\lambda_4)A|Y^{n+1}_s-Y^{n}_s||Y^{n}_s -Y^{n-1}_s|ds
	\end{align*}
	With some positive constants $\alpha_1$, $\alpha_2$, it follows from \ref{a3} and Young's inequality that
	\begin{align}
	\nonumber &e^{\beta t}|Y^{n+1}_t-Y^{n}_t|^2+\int_t^Te^{\beta s}(Z^{n+1}_s-Z^{n}_s)^2ds \leq e^{\beta T}k^2_5|X^{n+1}_T-X^n_T|^2\\
	\nonumber &\quad-2\int_t^Te^{\beta s}(Y^{n+1}_s-Y^{n}_s)(Z^{n+1}_s-Z^{n}_s)dW_s +\alpha_2\int_t^Te^{\beta s}|X^{n+1}_s-X^{n}_s|^2ds\\
	\label{eq:firstestimate_YZ}&\quad+\left(\frac{(\rho(M))^2}{\alpha_1}+\frac{k^2_{3}}{\alpha_2}+2k_4-\beta\right)\int_t^T e^{\beta s}(Y^{n+1}_s-Y^{n}_s)^2ds+\alpha_1\int_t^Te^{\beta s}|Z^{n+1}_s-Z^{n}_s|^2ds.
	\end{align}
	Letting $\beta=\frac{(\rho(M))^2}{\alpha_1}+\frac{k^2_{7}}{\alpha_2}+2k_8$ and taking expectation on both sides above, we have
	\begin{multline*}
	E\left[e^{\beta t}|Y^{n+1}_t-Y^{n}_t|^2\right]+E\left[\int_t^Te^{\beta s}(Z^{n+1}_s-Z^{n}_s)^2ds\right]\leq e^{\beta T}k^2_5E\left[|X^{n+1}_T-X^n_T|^2\right]\\
	+\alpha_1E\left[\int_t^Te^{\beta s}|Z^{n+1}_s-Z^{n}_s|^2ds\right] +\alpha_2E\left[\int_t^Te^{\beta s}|X^{n+1}_s-X^{n}_s|^2ds\right].
	\end{multline*}
	Putting $\alpha_1=\frac{1}{2}$ and $\alpha_2=1$, the previous estimate yields
	\begin{equation*}
	E\left[\int_0^Te^{\beta s}(Z^{n+1}_s-Z^{n}_s)^2ds\right]\leq 2e^{\beta T}k^2_5E\left[|X^{n+1}_T-X^n_T|^2\right]  +2E\left[\int_0^Te^{\beta s}|X^{n+1}_s-X^{n}_s|^2ds\right].
	\end{equation*}
	Next, taking conditional expectation with respect to $\mathcal{F}_t$ in \eqref{eq:firstestimate_YZ} gives
	\begin{multline*}
		e^{\beta t}|Y^{n+1}_t-Y^{n}_t|^2+E\left[\int_t^Te^{\beta s}(Z^{n+1}_s-Z^{n}_s)^2ds\bigg|\mathcal{F}_t\right]\leq e^{\beta T}k^2_5E\left[|X^{n+1}_T-X^n_T|^2|\mathcal{F}_t\right]\\
		+\alpha_1E\left[\int_t^Te^{\beta s}|Z^{n+1}_s-Z^{n}_s|^2ds\bigg|\mathcal{F}_t\right] +\alpha_2E\left[\int_t^Te^{\beta s}|X^{n+1}_s-X^{n}_s|^2ds\bigg|\mathcal{F}_t\right].
	\end{multline*}
	Thus, by Burkholder-Davis-Gundy's inequality, with a positive constant $c_1$ and $\alpha_1 = \frac{1}{2}$, $\alpha_2 = 1$, we have
	\begin{align*}
		&E\left[\sup_{0\leq t\leq T}e^{\beta t}|Y^{n+1}_t-Y^{n}_t|^2\right]
		\leq c_1e^{\beta T}k^2_5E\left[|X^{n+1}_T-X^n_T|^2\right]\\
		&\qquad \qquad\qquad+c_1\frac{1}{2}E\left[\int_0^Te^{\beta s}|Z^{n+1}_s-Z^{n}_s|^2ds\right] +c_1E\left[\int_0^Te^{\beta s}|X^{n+1}_s-X^{n}_s|^2ds\right]\\
		&\qquad\qquad \quad\leq 2c_1e^{\beta T}k^2_5E\left[|X^{n+1}_T-X^n_T|^2\right] +2c_1E\left[\int_0^Te^{\beta s}|X^{n+1}_s-X^{n}_s|^2ds\right].
	\end{align*}
	It now follows from \eqref{eq:estimate_X} that
	\begin{align*}
	&E\left[\sup_{0\leq t\leq T}|Y^{n+1}_t-Y^{n}_t|^2\right]+E\left[\int_0^T(Z^{n+1}_s-Z^{n}_s)^2ds\right]\\
	&\qquad \leq 8(c_1+1)e^{\beta T}(k^2_5+T)T^2k^2_2E\left[\sup_{0\leq t\leq T}|Y^{n}_t-Y^{n-1}_t|^2\right].
	\end{align*}
	Taking $T$ small enough so that
	\begin{equation*}
	8(c_1+1)e^{\beta T}(k^2_5+T)T^2k^2_2\leq\frac{1}{2},
	\end{equation*}
	we obtain that $(X^n,Y^n,Z^n)$ is a Cauchy sequence in $\mathcal{S}^2(\mathbb{R}^m)\times\mathcal{S}^2(\mathbb{R}^{l})\times\mathcal{H}^2(\mathbb{R}^{l\times d})$.
	Thus, it suffices to define $2^2_{k,\lambda,d,l}$ by the conditions
	\begin{equation*}
		\begin{cases}
			2T^2k_1^2\le \frac{1}{2}\\
			8(c_1+1)e^{\beta T}(k^2_5+T)T^2k^2_2\leq\frac{1}{2}.
		\end{cases}
	\end{equation*}
	By continuity of $b,g$ and $h$ we have the existence of a solution $(X,Y,Z)$ in $\mathcal{S}^2(\mathbb{R}^m)\times\mathcal{S}^2(\mathbb{R}^{l})\times\mathcal{H}^2(\mathbb{R}^{l\times d})$ of FBSDE \eqref{eq:fbsde1} and it follows from the boundedness of $(Z^n)$ that $\abs{Z_t}\le M$.
	The uniqueness in $\mathcal{S}^2(\mathbb{R}^m)\times\mathcal{S}^2(\mathbb{R}^{l})\times\mathcal{S}^\infty(\mathbb{R}^{l\times d})$ follows from the boundedness of $Z$ and by repeating the above arguments on the difference of two solutions.

	\emph{Step 3}: If one of the functions $b$, $g$ or $h$ is not differentiable, we apply the technique of the proof of Theorem \ref{thm:sup}.
	Namely, we use an approximation by the smooth functions defined as follows:
	For $n\in\mathbb{N}$, let $\beta^1_n, \beta^2_n$ and $\beta^3_n$ be nonnegative $C^\infty$ functions with support on $\{x\in\mathbb{R}^m:|x|\leq\frac{1}{n}\}$, $\{x\in\mathbb{R}^{m+l}:|x|\leq\frac{1}{n}\}$ and $\{x\in\mathbb{R}^{m+l+l\times d}:|x|\leq\frac{1}{n}\}$ respectively, and satisfying $\int_{\mathbb{R}^m}\beta^1_n(r)dr=1$, $\int_{\mathbb{R}^{m+l}}\beta^2_n(r)dr=1$ and $\int_{\mathbb{R}^{m+l+l\times d}}\beta^3_n(r)dr=1$.
	We define the convolutions
	\begin{align*}
		b^n_t(x,y)&:=\int_{\mathbb{R}^{m+l}}b_t(x',y')\beta^2_n(x'-x,y'-y)dx'dy',\quad
		h^n(x):=\int_{\mathbb{R}^{m}}h(x')\beta^1_n(x'-x)dx',\\
		g^n(u,x,y,z)&:=\int_{\mathbb{R}^{m+l+l\times d}}g(u,x',y',z')\beta^3_n(x'-x,y'-y,z'-z)dx'dy'dz'.
	\end{align*}
	It is easy to check that $b^n$ satisfies \ref{a1} with the constants $k_1,k_2$ and $2\lambda_1$ and that $g^n$ and $h^n$ satisfy \ref{a4} - \ref{a5} and \ref{a3}, respectively, with the same constants.
	From \emph{Steps 1 and 2}, there exists a positive constant $\bar{C}_{k, \lambda, l,d}$ independent of $n$ such that if $T \le \bar{C}_{k,\lambda,l,d}$, FBSDE \eqref{eq:fbsde1} with parameters $(b^n,h^n,g^n)$ admits a unique solution $(X^n, Y^n, Z^n) \in {\cal S}^2(\mathbb{R}^m)\times {\cal S}^2(\mathbb{R}^{l})\times {\cal S}^\infty(\mathbb{R}^{l\times d})$ and
	\begin{equation*}
		|Z^{n}_t|\leq M.
	\end{equation*}
	By the Lipschitz continuity conditions on $b$ and $h$ and the locally Lipschitz condition of $g$, the sequences $(b^n)$ and $(h^n)$ converge uniformly to $b$ and $h$ on $\mathbb{R}^{m+l}$ and $\mathbb{R}^m$, respectively, and $(g^n)$ converges to $g$ uniformly on $\mathbb{R}^{m+l}\times \Lambda$ for any compact subset $\Lambda$ of $\mathbb{R}^{l\times d}$.
	Combining these uniform convergences with the boundedness of $Z^n$, similar to above, we can show that there exists a constant $\tilde{C}_{k,\lambda,l,d}$ depending only on $k_1,k_2, k_3,k_4,k_5,\lambda_2, l,d$ such that if $T \le \tilde{C}_{k,\lambda,l,d}$, $(X^n, Y^n, Z^n)$ is a Cauchy sequence in the Banach space ${\cal S}^2(\mathbb{R}^m)\times {\cal S}^2(\mathbb{R}^{l})\times {\cal H}^2(\mathbb{R}^{l\times d})$.

	In fact, for any $m,n\in \mathbb{N}$, using Cauchy-Schwarz' inequality we have
	\begin{align*}
		\abs{X^n_t - X^m_t}^2 &\le T\int_0^T\abs{b_u^n(X_u^n,Y^n_u)-b_u^m(X^m_u,Y^m_u)}^2\,du.
	\end{align*}
	Thus, taking the supremum with respect to $t$ and then expectation on both sides give
	\begin{align}
	\nonumber	&\norm{X^n - X^m}^2_{{\cal S}^2(\mathbb{R}^m)} \\
	\nonumber	&\quad \le 3T\int_0^T(\abs{b_u^n(X_u^n,Y_u^n)-b_u(X^n_u,Y^n_u)}^2 + \abs{b_u^m(X_u^m,Y^m_u)-b_u(X^m_u,Y_u^m)}^2\\
	\nonumber	&\quad \quad + \abs{b_u(X_u^n,Y^n_u)-b_u(X^m_u,Y^m_u)}^2)du\\
	\nonumber	&\quad \le 3T\int_0^T(\abs{b_u^n(X_u^n,Y^n_u)-b_u(X^n_u,Y^n_u)}^2 + \abs{b_u^m(X_u^m,Y^m_u)-b_u(X^m_u,Y^m_u)}^2)\,du\\
	\label{eq:estimX}	&\quad \quad + 3k_1^2T^2\norm{X^n-X^m}^2_{{\cal S}^2(\mathbb{R}^m)} + 3k^2_2T^2\norm{Y^n-Y^m}^2_{{\cal S}^2(\mathbb{R}^l)}
	\end{align}
	where the second inequality follows from \ref{a1}.
	On the other hand, applying It\^o's formula as in \emph{Step 2}, one has
	\begin{align}
	\nonumber		&\abs{Y^m_t - Y^n_t}^2+\int_t^T\abs{Z^n_u - Z^m_u}^2\,du\\
	\nonumber	&\quad \le  \abs{h^n(X^n_T) - h(X^n_T)}^2 +  \abs{h^m(X^m_T) - h(X^m_T)}^2 + k^2_5\abs{X^n_T - X^m_T}^2 \\
	\nonumber	&\quad \quad -2\int_t^T(Y^{n+1}_s-Y^{n}_s)(Z^{n+1}_s-Z^{n}_s)dW_s \\
	\nonumber	&\quad \quad + \int_t^T\abs{Y^n_u - Y^m_u}(\abs{g^n_u(X^n_u,Y^n_u,Z^n_u)-g_u(X^n_u,Y^n_u,Z^n_u)} \\
	\nonumber	&\qquad \qquad + \abs{g^m_u(X^m_u,Y^m_u,Z^m_u)-g_u(X^m_u,Y^m_u,Z^m_u)} + k_3\abs{X^n_u-X^m_u} + k_4\abs{Y^n_u-Y^m_u}\\
	\label{eq:estimCauchy}	&\qquad \qquad + \rho(M)\abs{Z^n_u-Z^m_u}).
	\end{align}
	Taking expectation, due to Young's inequality we have
	\begin{align*}
	 &\norm{Z^{n}-Z^m}^2_{{\cal H}^2(\mathbb{R}^{l\times d})}\\
	  &\quad \le  E[\abs{h^n(X^n_T) - h(X^n_T)}^2] +  E[\abs{h^m(X^m_T) - h(X^m_T)}^2] + (k^2_5+ \frac{1}{2}Tk^2_3)\norm{X^n - X^m}_{{\cal S}^2(\mathbb{R}^m)}^2 \\
	\nonumber	&\quad\quad + \frac{1}{2}T\norm{Y^n-Y^m}^2_{{\cal S}^2(\mathbb{R}^l)} + \frac{1}{2}\int_0^T\abs{g^n_u(X^n_u, Y^n_u, Z^n_u)-g_u(X^n_u, Y^n_u, Z^n_u)}^2\\
	\nonumber	&\qquad \qquad + \abs{g^m_u(X^m_u, Y^m_u, Z^m_u)-g_u(X^m_u, Y^m_u, Z^m_u)}^2 du\\
	\label{eq:estimY}	&\quad\quad   + \frac{1}{2}Tk_4^2\rho^2(M)\norm{Y^n-Y^m}_{{\cal S}^2(\mathbb{R}^l)}^2 + \frac{1}{2}\norm{Z^n-Z^m}_{{\cal H}^2(\mathbb{R}^{l\times d})}^2.
	 \end{align*}
	On the other hand, taking conditional expectation in \eqref{eq:estimCauchy} and then the supremum with respect to $t$ and then expectation on both sides, we have due to Young's inequality
	\begin{align}
	\nonumber	&\norm{Y^n- Y^m}_{{\cal S}^2(\mathbb{R}^l)}^2\\% + \norm{Z^n - Z^m}^2_{{\cal H}^2(\mathbb{R}^{l\times d})}\\
	\nonumber	&\quad \le  E[\abs{h^n(X^n_T) - h(X^n_T)}^2] +  E[\abs{h^m(X^m_T) - h(X^m_T)}^2] + k^2_5\norm{X^n - X^m}_{{\cal S}^2(\mathbb{R}^m)}^2 \\
	\nonumber	&\quad\quad + \frac{1}{2}T\norm{Y^n-Y^m}^2_{{\cal S}^2(\mathbb{R}^l)} + \frac{1}{2}\int_0^T\abs{g^n_u(X^n_u, Y^n_u, Z^n_u)-g_u(X^n_u, Y^n_u, Z^n_u)}^2\\
	\nonumber	&\qquad \qquad + \abs{g^m_u(X^m_u, Y^m_u, Z^m_u)-g_u(X^m_u, Y^m_u, Z^m_u)}^2 du\\
	\nonumber	&\quad\quad \frac{1}{2}Tk_3^2\norm{X^n-X^m}_{{\cal S}^2(\mathbb{R}^m)}^2 + \frac{1}{2}Tk_4^2\rho^2(M)\norm{Y^n-Y^m}_{{\cal S}^2(\mathbb{R}^l)}^2 + \frac{1}{2}\norm{Z^n-Z^m}_{{\cal H}^2(\mathbb{R}^{l\times d})}^2.
	\end{align}
	Combining \eqref{eq:estimX} and \eqref{eq:estimY} we observe that if $T$ is small enough so that
	\begin{equation*}
		\begin{cases}
		3k_1^2T^2\le \frac{1}{2}\\
		\frac{1}{2}T + 3k^2_5k^2_2T^2 + \frac{3}{2}T^3k^2_3k_2^2 + \frac{1}{2}Tk^2_4\rho^2(M)\le \frac{1}{2}
		\end{cases}
	\end{equation*}
	then, the uniform convergence of $(b^n)$, $g^n$ and $(h^n)$ to $b$, $g$ and $h$ ensure that $(X^n,Y^n, Z^n)$ is a Cauchy sequence.
	The verification that the limit $(X,Y,Z)$ of the sequence $(X^n, Y^n,Z^n)$ solves the FBSDE \eqref{eq:fbsde1} uses continuity of the functions $b$, $h$ and $g$, and that $\abs{Z_t}\le M$ is a consequence of the boundedness of $(Z^n)$.
	Taking $C_{k,\lambda,l,d} := \tilde{C}_{k,\lambda,l,d}\wedge \bar{C}_{k,\lambda,l,d} $ concludes the proof.
\hfill $\Box$\\

Due to Theorem \ref{thm:markov1} above, our global existence result now follows from a pasting procedure. %of the proof of Theorem \ref{thm:markov2}.

\subsection{Proof of Theorem \ref{thm:fbsde-global}}

If $T\le C_{k,\lambda,l,d}$, then the result follows from Theorem \ref{thm:markov1}.

Assume $T> C_{k,\lambda,l,d}$ and let $\tilde{h}_M:\mathbb{R}\to \mathbb{R}$ be a continuously differentiable function whose derivative is bounded by $1$ and such that $\tilde{h}_M'(a) = 1$ for all $-M\le a\le M$ and
	\begin{equation*}
	 	\tilde{h}_M(a) = \begin{cases}
	 					(M + 1)& \text{if} \quad\ a > M + 2\\
	 					a &\text{if}  \quad \abs{a} \le M\\
	 					-(M+1) &\text{if} \quad a< -(M+2).
	 	\end{cases}
	 \end{equation*}
	 An example of such a function is given by
	 \begin{equation*}
	 	\tilde{h}_M(a) =\begin{cases}
	 				\left(-M^2 + 2Ma - a(a-4) \right)/4 &\text{if} \quad a \in [M, M+2]\\
	 				\left( M^2 + 2Ma + a(a+4) \right)/4 &\text{if} \quad [-(M+2),-M],
	 	\end{cases}
	 \end{equation*}
	see \citet{Imk-Reis}.
	By the assumptions \ref{a3} the function $\tilde{g}:[0,T] \times\mathbb{R}^m\times\mathbb{R}^{l}\times\mathbb{R}^{l\times d} \to \mathbb{R} $ defined by
	\begin{equation}
	\label{eq:g-tilde2}
		\tilde{g}_t(x,y, z) := g_t(x, y, h_M(z))
	\end{equation}
	with $h_M(z) := (\tilde{h}_M(z^{ij}))_{ij} $ is Lipschitz continuous in all variables.
Thus, it follows from  \citet[Theorem 2.6]{Delarue} that the equation
	\begin{equation}\label{eq:fbsde1_l}
	\begin{cases}
		\tilde{X}_t = x + \int_0^tb_u(\tilde{X}_u, \tilde{Y}_u)\,du + \int_0^t\sigma_u\,dW_u\\
		\tilde{Y}_t = h(\tilde{X}_{T}) + \int_t^T\tilde{g}_u(\tilde{X}_u, \tilde{Y}_u,\tilde{Z}_u)\,du - \int_t^T\tilde{Z}_u\,dW_u, \quad t \in [0,T]
	\end{cases}
	\end{equation}
 	admits a unique solution $(\tilde{X}, \tilde{Y}, \tilde{Z}) \in {\cal S}^2(\mathbb{R}^m) \times {\cal S}^\infty(\mathbb{R}^{l})\times{\cal S}^\infty(\mathbb{R}^{l\times d}) $.
 	Moreover, there exists a Lipschitz continuous function $\theta:[0,T]\times\mathbb{R}^m\rightarrow\mathbb{R}^{l}$ bounded by a constant $K$ such that $\tilde{Y}_t = \theta(t, \tilde{X}_t)$ for all  $t\in[0,T]$.
 	In fact, for every $x,x' \in \mathbb{R}^d$, $t \in [0,T]$ and $i = 1, \dots, l$ we have
 	\begin{align*}
 		&\theta(t,\tilde{X}_t^x) - \theta(t,\tilde{X}^{x'}_t)\\
 		 &= h^i(\tilde{X}_T^x) - h^i(\tilde{X}^{x'}_T) + \int_t^Tg^i_u(\tilde{X}^x_u, \tilde{Y}^x_u, \tilde{Z}^x_u)-g^i_u(\tilde{X}^{x'}_u, \tilde{Y}^{x'}_u, \tilde{Z}^{x'}_u)\,du - \int_t^T\tilde{Z}^{x,i}_u - \tilde{Z}^{x',i}_u\,dW_u\\
 		                       &= h^i(\tilde{X}_T^x) - h^i(\tilde{X}^{x'}_T) + \int_t^T\frac{g^i_u(\tilde{X}^x_u, \tilde{Y}^x_u, \tilde{Z}^x_u)-g^i_u(\tilde{X}^{x'}_u, \tilde{Y}^{x'}_u, \tilde{Z}^{x'}_u)}{\tilde{Z}^{x,i}_u-\tilde{Z}^{x'i}_u}(\tilde{Z}^{x,i}_u-\tilde{Z}^{x'i}_u)1_{\{|\tilde{Z}^{x,i}_u-\tilde{Z}^{x',i}| \neq 0\}}\,du
 \end{align*}
 	\begin{align*}		
 		                       &\quad + \int_t^T(g^i_u(\tilde{X}^x_u, \tilde{Y}^x_u, \tilde{Z}^x)-g^i_u(\tilde{X}^{x'}_u, \tilde{Y}^{x'}_u, \tilde{Z}^{x'}_u))1_{\{|\tilde{Z}^{x,i}_u-\tilde{Z}^{x'i}_u| =0\}}\,du - \int_t^T\tilde{Z}^{x,i}_u - \tilde{Z}^{x',i}_u\,dW_u.
 	\end{align*}
 	Thus, Girsanov's theorem yields
 	\begin{align*}
 		&\abs{\theta^i(t,\tilde{X}^x_t) - \theta^i(t,\tilde{X}^{x'}_t)} \le\\
 		 &E^{Q^{i}}\left[ \abs{h^i(\tilde{X}_T^x) - h^i(\tilde{X}^{x'}_T)} +\int_t^T\abs{g^i_u(\tilde{X}^x_u, \tilde{Y}^x_u, \tilde{Z}^x_u)-g^i_u(\tilde{X}^{x'}_u, \tilde{Y}^{x'}_u, \tilde{Z}^{x'}_u)}1_{\{\abs{\tilde{Z}^{x,i}_u-\tilde{Z}^{x'i}_u}=0\}}\,du\mid {\cal F}_t\right]\\
 									 &\le E^{Q^{i}}\left[ k_5|\tilde{X}_T^x - \tilde{X}^{x'}_T|  +\int_t^T\left(k_3|\tilde{X}^x_u- \tilde{X}^{x'}_u|+ k_4|\tilde{Y}^x_u - \tilde{Y}^{x'}_u|\right)\,du\mid {\cal F}_t\right]
 	\end{align*}
 	where $Q^{i}$ is the probability measure given by
 	\begin{equation*}
 	\frac{dQ^{i}}{dP} = {\cal E}\left( \frac{g^i_u(\tilde{X}^x_u, \tilde{Y}^x_u, \tilde{Z}^x_u)-g^i_u(\tilde{X}^{x'}_u, \tilde{Y}^{x'}_u, \tilde{Z}^{x'}_u)}{\tilde{Z}^{x,i}_u-\tilde{Z}^{x',i}_u}1_{\{|\tilde{Z}^{x,i}_u-\tilde{Z}^{x',i}_u|\neq 0\}}\cdot W \right)_T.
 	\end{equation*}
 	By \ref{a4prime} and boundedness of $\tilde{Y}^{x}$ and $\tilde{Y}^{x'}$ is well defined.
 	Since by Gronwall's lemma we have
 	\begin{equation*}
 		|\tilde{X}^{x}_s - \tilde{X}^{x'}_s| \le (|\tilde{X}^x_t-\tilde{X}^{x'}_t| + k_2\int_s^T|\tilde{Y}^x_u - \tilde{Y}^{x'}_u|\,du)e^{k_1T}, \quad s\in [t,T],
 	\end{equation*}
 	it holds
 	\begin{align*}
 		&\abs{\theta^i(t,\tilde{X}^x_t) - \theta^i(t,\tilde{X}^{x'}_t)} \\
 		&\le E^{Q^{i}}\left[ e^{k_1T}(k_5+Tk_3) |\tilde{X}^x_t-\tilde{X}^{x'}_t| + (k_2k_3Te^{k_1T}+k_4+k_2k_5e^{k_1T})\int_t^T|\theta(u,\tilde{X}^{x}_u) - \theta(u,\tilde{X}^{x'}_u)|\,du \mid {\cal F}_t\right].
 	\end{align*}
 	Hence, $\abs{\theta^i(t,\tilde{X}^x_t) - \theta^i(t,\tilde{X}^{x'}_t)} \le u_t$ where $u_t$ is the solution of the ODE
 	\begin{equation*}
 		u_t = e^{k_1T}(k_5+Tk_3)l|\tilde{X}^x_t-\tilde{X}^{x'}_t|  + (k_2k_3Te^{k_1T}+k_4+k_2k_5e^{k_1T})\int_t^Tlu_s\,du
 	\end{equation*}
 	which is given by
 	\begin{equation*}
 		u_t = e^{k_1T}(k_5+Tk_3)l |\tilde{X}^x_t-\tilde{X}^{x'}_t| \exp\left((k_2k_3Te^{k_1T}+k_4+k_2k_5e^{k_1T})l(T-t)\right).
 	\end{equation*}
 	Thus,
 	\begin{equation*}
 		\abs{\theta(t,\tilde{X}^x_t) - \theta(t,\tilde{X}^{x'}_t)} \le  K_5|\tilde{X}^x_t-\tilde{X}^{x'}_t|,
 	\end{equation*}
 	with
 	$K_5:= \sqrt{l}e^{k_1T}(k_5+Tk_3)l \exp\left((k_2k_3Te^{k_1T}+k_4+k_2k_5e^{k_1T})lT\right)$
 	which show that $\theta$ is a Lipschitz function and the Lipschitz coefficient does not depend on the bound $M$ of $Z$.

	Let $\bar{C}_{k, \lambda,l,d}$ be the constant $C_{k, \lambda,l,d}$ with $k_5$ replaced by $K_5$ and put $N = \lfloor T/\bar{C}_{k, \lambda,l,d}\rfloor$, where $\lfloor a\rfloor$ denotes the integer part of $a$, and $t_i := i\bar{C}_{k,\lambda,l,d}$, $i = 0, \dots, N$ and $t_{N+1} = T$.
 Since $t_1\le \bar{C}_{k,\lambda,l,d}$, by Theorem \ref{thm:markov1} the FBSDE
	\begin{equation*}
	\begin{cases}
		X_t = x + \int_0^tb_u(X_u, Y_u )\,du + \int_0^t\sigma_u\,dW_u\\
		Y_t = \theta(t_1,X_{t_1}) + \int_t^{t_1}g_u(X_u, Y_u,Z_u)\,du - \int_t^{t_1}Z_u\,dW_u, \quad t \in [0,t_1]
	\end{cases}
\end{equation*}
	admits a unique solution $(X^1, Y^1, Z^1)$ such that $\abs{Z_t^1} \le \bar{M}$ with $\bar{M} = 4\lambda_2K_5\sqrt{dl}$ for all $t \in [0,t_1]$.
	Therefore, $(X^1, Y^1, Z^1)1_{[0,t_1]} = (\tilde{X}, \tilde{Y}, \tilde{Z})1_{[0,t_1]}$.
	Similarly, we obtain a family $(X^i, Y^i, Z^i)$ of solutions of the FBSDEs
	\begin{equation*}
	\begin{cases}
		X_t = \tilde{X}_{t_{i-1}} + \int_{t_{i-1}}^tb_u(X_u, Y_u )\,du + \int_{t_{i-1}}^t\sigma_u\,dW_u\\
		Y_t = \theta(t_i,X_{t_i}) + \int_t^{t_{i}}g_u(X_u, Y_u,Z_u)\,du - \int_t^{t_{i}}Z_u\,dW_u, \quad t \in [t_{i-1},t_{i}]
	\end{cases}
\end{equation*}
	such that $(X^i, Y^i, Z^i)1_{[t_{i-1},t_i]} = (\tilde{X}, \tilde{Y}, \tilde{Z})1_{[t_{i-1},t_{i}]}$, $i = 1,\dots,N+1$.
	Define
	\begin{equation*}
		X := \sum_{i=1}^{N+1}X^i1_{[t_{i-1}, t_i]}; \quad Y := \sum_{i=1}^{N+1}Y^i1_{[t_{i-1}, t_i]} \quad \text{and } Z := \sum_{i=1}^{N+1}Z^i1_{[t_{i-1}, t_i]}.
	\end{equation*}
	Then, $(X, Y, Z) \in  {\cal S}^2(\mathbb{R}^m) \times {\cal S}^\infty(\mathbb{R}^{l}) \times {\cal S}^\infty(\mathbb{R}^{l\times d})$ is the unique solution of the FBSDE \eqref{eq:fbsde1} satisfying $\abs{Z_t} \le \bar{M}$ for all $t \in [0, T]$.
		In fact, it is clear that $(X, Y, Z) \in  {\cal S}^2(\mathbb{R}^m) \times {\cal S}^\infty(\mathbb{R}^{l}) \times {\cal S}^\infty(\mathbb{R}^{l\times d})$ as a finite sum of elements of the same space.
	Let $t \in [0,T]$ and $i = 1, \dots, N+1$ such that $t \in [t_{i-1}, t_i]$.
	We have
	\begin{align*}
		x + \int_0^tb_u(X_u)\,du  + \int_0^t\sigma_u\,du
										    &= x + \sum_{j=1}^i\left( \int_{t_{j-1}}^{t_j\wedge t} b_u(X^j_u)\,du + 	\int_{t_{j-1}}^{t_j\wedge t}\sigma_u\,dW_u \right)\\
										   & = X^i_t =  X_t
	\end{align*}	
	 and
	 \begin{align*}
	 	h(X_T)& + \int_t^Tg_u(X_u, Y_u, Z_u)\,du - \int_t^TZ_u\,dW_u\\
	 	      & = h(X^{N+1}_T) + \sum_{j = i}^{N+1} \left( \int_{t_{j-1}\vee t}^{t_j}g_u(X^j_u, Y^j_u, Z^j_u)\,du - \int_{t_{j-1}\vee t}^{t_j}Z^j_u\,dW_u \right)
	 	       = Y^i_t = Y_t.
	 \end{align*}
	 That is, $(X, Y, Z)$ satisfies Equation \eqref{eq:fbsde1}.
\hfill $\square$

\subsection{Proof of Proposition \ref{thm:prop_coupled} }

\label{sec:FBSDE1}
By Theorem \ref{thm:fbsde-global} the FBSDE
\begin{equation*}
	\begin{cases}
		\tilde{X}_t &=x+\int_{0}^{t}\tilde{b}_s(\tilde{X}_s,\tilde{Y}_s)\,ds+\int_{0}^{t}\sigma_sdW_s\\
	\tilde{Y}_t &=\tilde{h}(\tilde{X}_T)+\int_{t}^{T}\tilde{g}_s(\tilde{X}_s,\tilde{Y}_s,\tilde{Z}_s)\,ds-\int_{t}^{T}\tilde{Z}_sdW_s, \quad t\in [0,T]
	\end{cases}
\end{equation*}

has a unique global solution $(\tilde{X}, \tilde{Y}, \tilde{Z}) \in {\cal S}^2(\mathbb{R}^m)\times {\cal S}^2(\mathbb{R}^{l})\times {\cal S}^\infty(\mathbb{R}^{l\times d})$ such that $|\tilde{Z}_t|\leq \tilde{M}$ for some constant $\tilde{M} \ge 0$. Let $X_t=\tilde{X}_t, Y_t=\Gamma^{-1}\tilde{Y},Z_t=\Gamma^{-1}\tilde{Z}$, we obtain that $(X, Y, Z) \in {\cal S}^2(\mathbb{R}^m)\times {\cal S}^2(\mathbb{R}^{l})\times {\cal S}^\infty(\mathbb{R}^{l\times d})$ such that $|Z_t|\leq \bar{M}$ for some constant $\bar{M} \ge 0$. Moreover, $(X,Y,Z)$ satisfies the FBSDE \eqref{eq:fbsde1}. This completes the proof.

\hfill $\square$

\bibliographystyle{abbrvnat}
\bibliography{reference-qfbsde}

\end{document}